\documentclass[10pt,a4paper]{amsart} 
\usepackage{url}
\usepackage{amsopn,amsmath,amssymb,amsthm}

\theoremstyle{remark}
\newtheorem{remark}{Remark}
\theoremstyle{plain}
\newtheorem{theorem}[remark]{Theorem}
\newtheorem{proposition}[remark]{Proposition}
\newtheorem{lemma}[remark]{Lemma}

\let\ge=\varepsilon

\newenvironment{notation}{\textbf{Notation.} \upshape}{\hfill\par}

\begin{document}
    \title[A note on Schr\"{o}dinger--Newton systems]{A note on Schr\"{o}dinger--Newton systems with decaying electric potential}
    \author{Simone Secchi}
    \address{Dipartimento di Matematica ed Applicazioni, Universit\`a di Milano--Bicocca, via R.~Cozzi 53, edificio U5, I-20125 Milano (Italy).}
    \email{simone.secchi@unimib.it}

    \date{\today} 
        
    \maketitle
    
    \begin{abstract}
        We prove the existence of solutions for the singularly perturbed Schr\"{o}dinger--Newton system 
        \begin{equation*} 
\left\{
\begin{array}{ll}
  \hbar^2 \Delta \psi - V(x) \psi + U \psi =0  \\
  \hbar^2 \Delta U + 4\pi \gamma |\psi|^2 =0   
\end{array}
\right.
\hbox{in $\mathbb{R}^3$}
\end{equation*}
        with an electric potential \(V\) that decays polynomially fast at infinity. The solution $\psi$ concentrates, as $\hbar \to 0$,  around (structurally stable) critical points of the electric potential. As a particular case, isolated strict extrema of \(V\) are allowed.
    \end{abstract}
    
    \section{Introduction and statement of the result}
    
    Systems of Schr\"{o}dinger--Newton type like
    \begin{equation} \label{eq:1}
\left\{
\begin{array}{ll}
  (2m)^{-1}\hbar^2 \Delta \psi - V(x) \psi + U \psi =0  \\
  \Delta U + 4\pi \gamma |\psi|^2 =0   
\end{array}
\right.
\hbox{in $\mathbb{R}^3$}
\end{equation}
were introduced by R.~Penrose in \cite{P} to describe a system in which a mass point (of mass $m$) is placed at the origin, under the effect of the gravitational field, and interacts with a matter density given by  the square of the wave function $\psi$, which is the solution of the Schr\"{o}dinger equation. In addition, an electric field is generated by a potential $V$. In (\ref{eq:1}), $U$ is the gravitational potential, $\gamma = G m^2$, $G$ is Newton's constant, and $\hbar$ is Planck's constant. 

A few papers have recently appeared in the literature about this (or some related) problem, with particular attention to the so-called \emph{semiclassical regime} \(\hbar \to 0\).
In \cite{WW}, the existence of multibump solutions concentrating (as~\(\hbar \to 0\)) at local minimum points of $V$ is proved, under the assumption that 
\begin{equation*}
\inf_{x \in \mathbb{R}^3} V(x)>0.
\end{equation*}
This is a broadly--used assumption even in the study of a singular Schr\"{o}dinger equation of the form
\begin{equation} \label{eq:3}
-\hbar^2 \Delta u + V(x)u=f(u).
\end{equation}
Many efforts have been made to prove existence  results  (both for $\hbar=1$ and $\hbar \to 0$) for (\ref{eq:3}) when the potential $V$ is either null in some region (\cite{BW}), or tends to zero at infinity (\cite{AFM,MVS}), or even both things (\cite{BW2}). In this note, we are interested in the second case, under some additional assumptions on the rate of decay at infinity. It will be convenient to scale variables in (\ref{eq:1}):
\begin{displaymath}
\psi = \frac{1}{\hbar} \frac{\hat{\psi}}{\sqrt{8 \pi \gamma m}}, \quad V = \frac{1}{2m} \hat{V}, \quad U = \frac{1}{2m} \hat{U}.
\end{displaymath}
Then (\ref{eq:1}) becomes
\begin{equation} \label{eq:4}
\left\{
\begin{array}{ll}
  \hbar^2 \Delta \hat{\psi} - \hat{V}(x) \hat{\psi} + \hat{U} \hat{\psi} =0  \\
  \hbar^2 \Delta \hat{U} + 4\pi \gamma |\hat{\psi}|^2 =0   
\end{array}
\right.
\hbox{in $\mathbb{R}^3$}.
\end{equation}
By classical potential theory, we can solve the second equation of (\ref{eq:4}) with respect to $U$, and plug the result into the first equation. We need to solve the single \emph{non-local} equation
\begin{equation*}
\hbar^2 \Delta \hat{\psi} - \hat{V}(x)\hat{\psi} + \frac{1}{4 \pi \hbar^2} \left( \int_{\mathbb{R}^3} \frac{|\hat{\psi}(\xi)|^2}{|x-\xi |}\, d\xi \right) \hat{\psi}=0 \quad \hbox{in $\mathbb{R}^3$}.
\end{equation*}
We set for typographical convenience~$\varepsilon=\hbar$, $u=\hat{\psi}$, and $W(x)=1/(4 \pi |x|)$. We will study the singularly perturbed equation
\begin{equation} \label{eq:5}
-\varepsilon^2 \Delta u + V(x) u = \frac{1}{\varepsilon^2} \left( W * |u|^2 \right) u \quad \hbox{in $\mathbb{R}^3$}.
\end{equation}
\begin{remark}
Formally, the local nonlinearity $u^3$ corresponds to the convolution kernel $W=\delta$, the Dirac delta distribution.
\end{remark}
We will retain the following assumptions on $V$:
\begin{itemize}
\item[(V1)] there exist constants $A_0$, $A_1 >0$ and there exists $\alpha \in [0,2]$, such that
\begin{equation*}
\frac{A_0}{1+|x|^\alpha} \leq V(x) \leq A_1 \quad \hbox{for all $x \in \mathbb{R}^3$};
\end{equation*}
\item[(V2)] there exists a constant $V_1>0$ such that 
\begin{equation*}
\sup_{x \in \mathbb{R}^3} |\nabla V(x)| \leq V_1.
\end{equation*}
\end{itemize}
The second assumption is rather technical, while the first one allows the potential $V$ to decay to zero at infinity, although not faster than polynomially.

A simple change of variables yields
\begin{equation}\label{eq:7}
-\Delta u + V(\varepsilon x) u = \left( W * |u|^2 \right) u \quad \hbox{in $\mathbb{R}^3$}.
\end{equation}
We remark that the homogeneity of $W$ produces cancellation of \(\ge\) in the right-had side of (\ref{eq:7}). 
\begin{remark}
A similar model has been studied in \cite{MN}, with a similar technique. However, the equation of \cite{MN} does not scale coherently in the non-local term. In some sense, the Authors study a system like (\ref{eq:4}), in which the second equation is not singularly perturbed.
On the other hand, they allow very general convolution kernels $W$, even without homogeneity, but need a \emph{periodic potential} $V$. However, we do not know any (non-trivial) explicit convolution kernel that matches all the requirements of \cite{MN}, in particular the \emph{non-degeneracy} of the limiting equation (see Theorem \ref{th:2} for a definition of non-degeneracy in our context).
\end{remark}
\begin{remark}
In the recent paper \cite{CSS}, the Authors have studied a non-local equation like (\ref{eq:5}), under the action of an external magnetic field. The technique is based on a compactness analysis of the set of ground--states for the limiting problem. We do not believe that this method can be directly applied to the case under consideration.
\end{remark}
We can state our main existence result. 

\begin{theorem} \label{th:main}
Under the assumption (V1) and (V2), suppose that $V$ is smooth and that $x_0\in \mathbb{R}^3$ is an isolated strict local minimum (or maximum) point of $V$. Then for every $\varepsilon$ small enough, there exists (at least) a solution $u_\varepsilon$ of (\ref{eq:7}) such that $\int_{\mathbb{R}^3} |\nabla u_\varepsilon|^2 + V(\varepsilon x)|u_\varepsilon|^2 < \infty$. This solution is positive and concentrates at $x_0$ as $\varepsilon \to 0$.
\end{theorem}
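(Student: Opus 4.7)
The plan is to implement a finite-dimensional Lyapunov--Schmidt reduction centered on the ground states of the autonomous limit problem. First I would work in the Hilbert space $H_\varepsilon$ obtained by completing $C_c^\infty(\mathbb{R}^3)$ in the norm $\|u\|_\varepsilon^2 = \int |\nabla u|^2 + V(\varepsilon x) u^2\, dx$, and verify that the functional
\begin{equation*}
I_\varepsilon(u) = \frac{1}{2}\|u\|_\varepsilon^2 - \frac{1}{4}\int_{\mathbb{R}^3} (W*u^2)\,u^2\, dx
\end{equation*}
is well defined and smooth on $H_\varepsilon$. Controlling the quartic non-local term via the Hardy--Littlewood--Sobolev inequality reduces matters to embedding $H_\varepsilon$ into a suitable $L^p$ space; here (V1) with $\alpha\in[0,2]$ combines with a weighted Hardy inequality to keep the embedding uniform in $\varepsilon$ small, even where $V(\varepsilon x)$ decays.

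Second, I would analyze the autonomous limit problem at level $v>0$,
\begin{equation*}
-\Delta U + v\, U = (W * U^2)\,U \quad\text{in } \mathbb{R}^3,
\end{equation*}
which possesses a unique positive radial ground state $U_v$. Because of the homogeneity of $W$, $U_v$ is obtained by rescaling a single profile $U_1$, so that derivatives with respect to $v$ are explicit. The crucial analytical input is the \emph{non-degeneracy} of $U_v$: the kernel of the linearized operator $L_v \phi = -\Delta \phi + v\phi - 2(W*U_v\phi)U_v - (W*U_v^2)\phi$ in $H^1(\mathbb{R}^3)$ is spanned exactly by $\partial_1 U_v,\partial_2 U_v,\partial_3 U_v$, which is known for the Choquard ground state from the work of Lenzmann and Wei--Winter.

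Third, for each $\xi$ in a small neighborhood of $x_0$ I would form the ansatz $z_{\varepsilon,\xi}(x)=U_{V(\xi)}(x-\xi/\varepsilon)$, split $u = z_{\varepsilon,\xi}+w$ with $w$ orthogonal to $\mathrm{span}\{\partial_i z_{\varepsilon,\xi}\}$, and solve the projected equation for $w=w_{\varepsilon,\xi}$ via the implicit function theorem. The reduced function $\Phi_\varepsilon(\xi):=I_\varepsilon(z_{\varepsilon,\xi}+w_{\varepsilon,\xi})$ then admits, thanks to the exponential decay of $U_{V(\xi)}$, the expansion
\begin{equation*}
\Phi_\varepsilon(\xi) = \mathcal{C}(V(\xi)) + o(1) \qquad \text{as } \varepsilon\to 0,
\end{equation*}
uniformly on a small neighborhood of $x_0$, where $v\mapsto\mathcal{C}(v)$ is a smooth strictly monotone function given by the mountain-pass level of the limit problem. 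Since $V$ has an isolated strict extremum at $x_0$, so does $\mathcal{C}\circ V$; hence on a sufficiently small closed ball around $x_0$ the function $\Phi_\varepsilon$ attains an extremum in the interior, providing a critical point $\xi_\varepsilon\to x_0$. The corresponding $u_\varepsilon = z_{\varepsilon,\xi_\varepsilon}+w_{\varepsilon,\xi_\varepsilon}$ is the sought solution, and positivity is inherited from $U_{V(\xi_\varepsilon)}$ together with the smallness of the remainder $w_{\varepsilon,\xi_\varepsilon}$.

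The main obstacle I anticipate is the uniform invertibility of the linearized operator on the orthogonal complement in $H_\varepsilon$. Far from the concentration point $\xi/\varepsilon$ the effective potential $V(\varepsilon x)$ can be arbitrarily small, so the naive coercivity estimate available in the standard case $\inf V>0$ fails. The resolution is to exploit the lower bound in (V1) together with a weighted splitting of $\mathbb{R}^3$ into an inner region of radius $R$ around $\xi/\varepsilon$, where the exponential decay of $U_{V(\xi)}$ and non-degeneracy give quantitative coercivity, and an outer region where the term $V(\varepsilon x)\gtrsim A_0(1+|\varepsilon x|^\alpha)^{-1}$ combined with the gradient term controls the quartic non-local contribution via Hardy--Littlewood--Sobolev. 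Once this uniform invertibility is secured, the remaining estimates on the error of the ansatz and on the reduced function follow from classical arguments.
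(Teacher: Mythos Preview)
Your overall architecture --- Lyapunov--Schmidt around the rescaled Choquard ground states, non-degeneracy from Wei--Winter, inner/outer splitting for the invertibility, and reading off the critical point of the reduced functional from a strict extremum of $V$ --- is exactly the paper's strategy. But there is a genuine gap at the very first step, and it propagates.

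You assert that the energy $I_\varepsilon$ is smooth on $H_\varepsilon$ because ``(V1) with $\alpha\in[0,2]$ combines with a weighted Hardy inequality'' to give a uniform embedding into the $L^p$ space required by Hardy--Littlewood--Sobolev. This is not true. The HLS control of the quartic term needs $u\in L^{12/5}(\mathbb{R}^3)$, and $H_\varepsilon$ does \emph{not} embed into $L^{12/5}$ when $\alpha$ is close to $2$: a function behaving like $|x|^{-\beta}$ at infinity with $\tfrac12<\beta<\tfrac54$ lies in $D^{1,2}\cap L^2(|x|^{-\alpha}dx)$ but not in $L^{12/5}$. The paper states this explicitly (``the double integral \ldots\ may be infinite'') and fixes it by truncating the unknown \emph{inside} the convolution, so that one works with a modified $C^1$ functional on $H_\varepsilon$ that coincides with the original one on functions obeying a pointwise decay bound $|u(x)|\le \bar c(1+\varepsilon|x|)^{-\theta}$.

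This truncation is not cosmetic: it forces the auxiliary equation to be solved not by a plain implicit-function argument in $(T_{z_{\varepsilon,\xi}}Z)^\perp$, but by a contraction on a set $\Gamma_\varepsilon(\rho)$ that carries, besides the norm bound $\|w\|\le c_0\varepsilon$, a \emph{pointwise} bound $|w(x+\xi)|\le \gamma(\rho)\sqrt{\varepsilon}\,u_1(|x|)$, where $u_1$ is the decaying fundamental solution of $-\Delta+m|x|^{-\alpha}$ outside a ball. Showing that $S_\varepsilon$ preserves this pointwise envelope is the hardest step of the paper (comparison with a linear barrier problem, Bessel asymptotics when $\alpha<2$, power law when $\alpha=2$); it is precisely what guarantees that the truncation is inactive on $z_{\varepsilon,\xi}+w$ and hence that the critical point found solves the original equation. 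Your scheme, as written, produces a $w$ controlled only in the $H_\varepsilon$-norm, which neither keeps the nonlocal term finite on a full neighborhood nor certifies that the solution you obtain satisfies the untruncated problem. Once you incorporate the truncation and the pointwise fixed-point set, the rest of your outline matches the paper essentially verbatim.
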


\begin{remark}
Our solution is not, in general, a ground-state solution. Even for a single Schr\"{o}dinger equation with decaying potential, ground-state solutions need not exist at all (see \cite{AFM}). However, it is a \emph{bound--state} solution, since $\int_{\mathbb{R}^3} |\nabla u_\varepsilon|^2 + V(\varepsilon x)|u_\varepsilon|^2 < \infty$.
\end{remark}

\begin{remark}
It is easy to show that if a bound-state solution concentrates at some point $x_0 \in \mathbb{R}^3$, then $\nabla V(x_0)=0$. This can be done as in \cite{ABC} or \cite{SS}.
\end{remark}

\begin{remark}
 More general existence results can be proved. In particular, we can allow $V$ to possess a whole (compact) manifold of non--degenerate critical points. We will state this result in the last section of the paper.
\end{remark}

We spend a few words about the technique we employ to solve (\ref{eq:7}). This equation has a variational structure, so that we may use tools from Critical Point Theory. The elegant approach of \cite{AFM} does not seem to apply, since it is based on fine inequalities for the \emph{local} nonlinearity of the equation in order to recover compactness.
Similar non--local inequalities are not available, as far as we know. The penalization method introduced in \cite{BVS,MVS} (a refinement of the celebrated penalization technique introduced in \cite{DPF}) consists of changing the nonlinearity outside some bounded region, and again this seems quite hard to adapt, since in our equation the right--hand side depends on the values of the unknown itself \emph{over the whole space}.

The most flexible approach seems to be based on a perturbation technique in Critical Point Theory, introduced and widely applied by Ambrosetti \emph{et al.} some years ago. We refer to the book \cite{AM} for the details. In particular, we will modify the scheme of \cite{AMS} and \cite{AMR} to adapt it to our equation. Loosely speaking, we will (1) construct a set of ``quasi-solutions'', and (2) construct around this set a \emph{finite--dimensional natural constraint}: our problem will be therefore reduced to the search for critical points of a function of three variables. Some difficulties will arise, due to the fact that the potential function $V$ can approach zero at infinity.

Indeed, if $V$ is bounded away from zero, one can work in the usual Sobolev space $H^1(\mathbb{R}^3)$. This is what is done in \cite{WW}. Since $V$ is allowed to decay to zero, the integral $\int_{\mathbb{R}^3} V |u|^2$ need not be finite for every $u \in H^1(\mathbb{R}^3)$. It would be natural to work in some weighted space, but then the non-local term may be undefined. This kind of trouble already arises in single Schr\"{o}dinger equations, see \cite{AFM}.
We will adapt an idea introduced in \cite{AMR} (see also \cite{AR}): a suitable truncation \emph{inside} the convolution will be performed, in such a way that a new functional will be defined smoothly on the natural weighted Sobolev space. We stress that this truncation is not used in order to confine solutions in any region of the physical space. The concentration behavior of our solutions as $\varepsilon \to 0$ is \emph{not} a consequence of this penalization (as it is in the scheme of \cite{BVS}, for example), but follows from the construction itself.

\begin{notation}
\begin{enumerate}
\item $D^{1,2}(\mathbb{R}^3)$ is the space of all measurable functions $u \colon \mathbb{R}^3 \to \mathbb{R}$ such that $\int_{\mathbb{R}^3} |\nabla u|^2 \, dx < \infty$. This space is continuously imbedded into $L^6(\mathbb{R}^3)$, since $6$ is the Sobolev critical exponent in dimension three.
\item To save space, we will often write $\int$ instead of $\int_{\mathbb{R}^3}$ (and also instead of $\int_{\mathbb{R}^3 \times \mathbb{R}^3}$).
\item The letter $C$ will denote (positive) constants whose precise value is not made explicit, and that may vary from line to line. Inside a proof, we may also write $c_1$, $c_2$, $c_3$, \ldots, to denote possibly different constants.
\item The symbol $\nabla$ and $\nabla^2$ will be freely used to denote first and second derivatives of functions and functionals, especially when derivatives are thought of as vectors (through the Riesz isomorphism). 
\item Sometimes we will use Landau's symbols: $o_\ge(1)$ will stand, for example, for a quantity that tends to zero as $\ge \to 0$.
\end{enumerate}
\end{notation}

\section{The variational setting and background results}

As announced in the Introduction, we will work in the weighted space
\begin{equation*}
E=E_\varepsilon = \left\{ u \in D^{1,2}(\mathbb{R}^3) \mid \int_{\mathbb{R}^3} V(\varepsilon x) |u(x)|^2 \, dx < +\infty \right\},
\end{equation*}
which is a Hilbert space with respect to the inner product
\begin{equation*}
\langle u,v \rangle = \langle u,v \rangle_E = \int_{\mathbb{R}^3} \nabla u \cdot \nabla v \, dx + \int_{\mathbb{R}^3} V(\varepsilon x) u(x) v(x) \, dx
\end{equation*}
and corresponding norm
\begin{equation*}
\|u\|^2=\|u\|_E^2 = \int_{\mathbb{R}^3} |\nabla u|^2 \, dx + \int_{\mathbb{R}^3} V(\varepsilon x) |u(x)|^2 \, dx.
\end{equation*}

\bigskip

In the sequel, we will make an extensive use of the following \emph{Hardy--Littlewood--Sobolev inequality}. We refer to \cite{LL,S} for a proof, in a more general setting.

\begin{proposition} \label{prop:HLS}
Let $f \in L^{p}(\mathbb{R}^3)$, $g \in L^{t}(\mathbb{R}^3)$, with $\frac{1}{p}+\frac{1}{t}+\frac{1}{3}=2$. Then 
\begin{equation*}
\left| \int_{\mathbb{R}^3 \times \mathbb{R}^3} \frac{f(x)g(y)}{|x-y|}\, dx \, dy \right| \leq C \|f\|_p \|g\|_t
\end{equation*}
for some universal constant $C>0$ independent of $f$ and $g$. Moreover, if $r$, $s\in (1,+\infty)$ and $0 < \lambda < 3$ satisfy $\frac{1}{r}+\frac{\lambda}{3}=1+\frac{1}{s}$, then
\begin{equation} \label{eq:hls6}
\left\| \int_{\mathbb{R}^3} \frac{g(y)}{|\cdot -y|}\,  dy \right\|_s \leq C \|g\|_r
\end{equation}
\end{proposition}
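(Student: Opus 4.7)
The Hardy--Littlewood--Sobolev inequality is classical, so I would follow Hedberg's approach: establish the operator bound (\ref{eq:hls6}) first, then recover the bilinear inequality by duality.

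For (\ref{eq:hls6}), the key ingredient is a pointwise estimate on the Riesz-type convolution. Fix $x \in \mathbb{R}^3$ and, for a parameter $R>0$ to be chosen, split the integral over $\{|x-y|\leq R\}$ and over $\{|x-y|>R\}$. A dyadic decomposition of the inner piece into annuli $\{2^{-k-1}R < |x-y| \leq 2^{-k}R\}$ bounds it by $C R^{3-\lambda}(Mg)(x)$, where $M$ denotes the Hardy--Littlewood maximal function. H\"older's inequality applied to the outer piece produces a bound of order $R^{3/r'-\lambda}\|g\|_r$; the exponent $3/r' - \lambda$ is negative precisely because of the scaling relation $1/r + \lambda/3 = 1 + 1/s$. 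Optimising in $R$ gives the interpolation-type pointwise estimate
\[
\int \frac{|g(y)|}{|x-y|^{\lambda}}\, dy \leq C\, (Mg)(x)^{r/s}\, \|g\|_r^{1 - r/s}.
\]
Raising both sides to the $s$-th power, integrating over $\mathbb{R}^3$ and invoking the boundedness $M \colon L^r \to L^r$ (valid because $r>1$) concludes the proof of (\ref{eq:hls6}).

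The bilinear version then follows by pairing $f$ against the convolution and using H\"older's inequality in $x$, together with the already established (\ref{eq:hls6}) for $\lambda = 1$, $r=t$, $s' = p$. The two scaling conditions $1/p + 1/t + 1/3 = 2$ and $1/r + 1/3 = 1 + 1/s$ are indeed equivalent under this identification, as a short arithmetic check shows.

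The main obstacle, and presumably the reason the paper defers to \cite{LL,S}, lies in the endpoint behaviour: the maximal-function step fails at $r = 1$, and Lieb's sharp form of the inequality (with the optimal constant) requires a delicate rearrangement argument rather than the interpolation route above. For the applications in this paper, however, all exponents will lie strictly between $1$ and $\infty$, so the Hedberg-style proof sketched here is amply sufficient.
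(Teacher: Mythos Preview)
Your Hedberg-style argument is correct and is one of the standard routes to the Hardy--Littlewood--Sobolev inequality: the pointwise interpolation bound via the maximal function, followed by the $L^r$-boundedness of $M$, is exactly how Hedberg's lemma works, and your exponent bookkeeping checks out (in particular $a+b=3/r$ in the optimisation step, giving the claimed powers $r/s$ and $1-r/s$). The duality step recovering the bilinear form is also fine.

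The comparison with the paper is simple: the paper does \emph{not} prove this proposition at all. It is quoted as a classical tool, with the sentence ``We refer to \cite{LL,S} for a proof, in a more general setting'' standing in for the argument. So you have supplied strictly more than the paper does. Your sketch is closer in spirit to the treatment in Stein than to Lieb's rearrangement proof in \cite{LL}; the latter gives the sharp constant, while the maximal-function route you chose gives the inequality with an unspecified constant, which is all that is ever used in this paper. One small remark: as written in the paper, the kernel in \eqref{eq:hls6} is $|x-y|^{-1}$ while the scaling condition involves a general $0<\lambda<3$; you have (correctly) proved the inequality for the Riesz kernel $|x-y|^{-\lambda}$, which is the intended statement.
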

For $u \in E$, the double integral $\int_{\mathbb{R}^3 \times \mathbb{R}^3} W(x-y)u(x)^2u(y)^2 \, dx \, dy$ may be infinite. To exploit the variational structure of (\ref{eq:7}), we introduce a smooth function $\mu \in C^\infty(\mathbb{R})$ such that $0 \leq \mu \leq 1$, $\mu(s)=1$ if $s < 1$, and $\mu (s)=0$ if $s>2$. For any $u \in E$, we set
\[
\Upsilon (x,u) = \mu \left( \bar{c}^{-1} |u| \cdot (1+\varepsilon |x|)^\theta \right), 
\]
where the constants $\bar{c}$ and $\theta$ will be chosen appropriately later. Then, we define the truncation
\begin{equation*}
T_\varepsilon u (x) = \Upsilon(x,u)u+ \left( 1-\Upsilon (x,u) \right) \frac{\bar{c}}{(1+\varepsilon |x|)^\theta}.
\end{equation*}
In particular,
\[
T_\varepsilon u (x) =
\begin{cases}
u(x), &\hbox{if } u(x) < \bar{c} \left(1+\varepsilon |x| \right)^{-\theta} \\
\bar{c} \left(1+\varepsilon |x| \right)^{-\theta} &\hbox{if } u(x) > 2\bar{c} \left(1+\varepsilon |x| \right)^{-\theta}
\end{cases}
\]
and we can always choose $\theta>0$ so that the functional $I_\varepsilon \colon E \to \mathbb{R}$ defined by
\begin{equation*}
I_\varepsilon (u) = \frac{1}{2} \|u\|_E^2 - \frac{1}{4} \int_{\mathbb{R}^3 \times \mathbb{R}^3} W(x-y) |T_\varepsilon u(x)|^2 |T_\varepsilon u(y)|^2\, dx \, dy
\end{equation*}
is of class $C^1(E)$. As such, every critical point $u_\varepsilon$ of $I_\varepsilon$ such that $|u_\varepsilon (x)| < \bar{c} \left( 1+\varepsilon |x| \right)^{-\theta}$ is a solution of (\ref{eq:7}). 

To proceed further, we need to recall some results about  (\ref{eq:7}) with $\varepsilon$ fixed.

\begin{theorem} \label{th:2}
There exists a unique radial solution of the problem
\begin{equation*}
\begin{cases}
-\Delta U + U = \left( W * |U|^2 \right) U &\hbox{in $\mathbb{R}^3$} \\
U>0 &\hbox{in $\mathbb{R}^3$} \\
U(0)=\max_{x\in\mathbb{R}^3} U(x).
\end{cases}
\end{equation*}
The solution~$U$ is a strictly decreasing function that decays exponentially fast at infinity together with its first derivatives. Moreover, it is non-degenerate in the following sense: if $\phi \in H^2(\mathbb{R}^3)$ solves the linearized equation 
\[
-\Delta \phi + \phi = \left( W * |U|^2 \right) \phi + 2 \left( W * (U \phi) \right) U,
\]
then $\phi$ is a linear combination of $\partial U / \partial x_j$, $j=1,2,3$.
\end{theorem}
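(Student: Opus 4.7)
The theorem collects four facts about the Choquard--Newton equation at $\varepsilon=1$: existence of a positive radial solution, its monotonicity and exponential decay, its uniqueness, and the non-degeneracy of the linearization. I would treat these in turn, expecting non-degeneracy to be the genuinely hard step.

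For existence, I would set up the action functional
\[
\mathcal{J}(u) = \frac{1}{2}\int \left(|\nabla u|^2 + u^2\right) - \frac{1}{4}\iint W(x-y)\, u(x)^2 u(y)^2,
\]
which is well defined and of class $C^1$ on $H^1(\mathbb{R}^3)$ thanks to Proposition~\ref{prop:HLS}. The same inequality, combined with the Sobolev embedding, gives the mountain pass geometry for $\mathcal{J}$. Schwarz symmetrization together with Riesz's rearrangement inequality allows one to restrict to radial, radially decreasing functions, and Strauss's compact embedding $H^1_{\mathrm{rad}}(\mathbb{R}^3) \hookrightarrow L^p(\mathbb{R}^3)$ for $2<p<6$ yields compactness of a Palais--Smale sequence at the mountain pass level. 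A positive radial critical point $U$ then follows by the strong maximum principle applied to $-\Delta U + U = (W*U^2) U$, once one observes that $W*U^2 \in L^\infty$.

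For monotonicity and decay, introduce $\Phi = W*U^2$, so that $-\Delta \Phi = U^2$ and standard Newtonian potential estimates give $\Phi(x) = O(1/|x|)$ at infinity. The equation rewrites as $-\Delta U + (1-\Phi)U = 0$; since $1-\Phi \ge 1/2$ outside a sufficiently large ball, comparison with a supersolution of the form $c\,e^{-\delta |x|}$ for some $\delta < 1/\sqrt{2}$ yields exponential decay of $U$, and elliptic bootstrap carries this decay over to $\nabla U$. Strict radial monotonicity $U'(r)<0$ is then read off the radial ODE. Uniqueness among positive radial solutions is the classical theorem of Lieb: one reduces to the coupled system
\[
-U'' - \tfrac{2}{r}U' + U = \Phi U, \qquad -\Phi'' - \tfrac{2}{r}\Phi' = U^2,
\]
with boundary data $U'(0)=\Phi'(0)=0$ and $U(\infty)=\Phi(\infty)=0$, and carries out a shooting argument in $(U(0),\Phi(0))$ coupled with the variational characterization of $U$.

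The delicate part is the non-degeneracy of
\[
L\phi = -\Delta\phi + \phi - (W*U^2)\phi - 2\bigl(W*(U\phi)\bigr)U.
\]
Translation invariance of the original equation gives $L(\partial_j U)=0$ for $j=1,2,3$; the task is to show these are the only elements of $\ker L$ in $H^2(\mathbb{R}^3)$. I would decompose
\[
\phi(x) = \sum_{\ell=0}^{\infty} \sum_{|m|\leq \ell} \phi_{\ell,m}(r)\, Y_{\ell,m}(x/|x|),
\]
and use the expansion of $1/|x-y|$ in spherical harmonics to see that $L$ acts diagonally on each $(\ell,m)$ sector, producing scalar radial operators $L_\ell$. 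For $\ell \geq 2$ the centrifugal term $\ell(\ell+1)/r^2$ combined with a bound on the nonlocal exchange contribution forces $L_\ell$ to be strictly positive, so $\phi_{\ell,m}=0$. For $\ell=1$ a Sturm-type argument on a scalar radial equation shows $\ker L_1$ is one-dimensional and spanned by $U'(r)$, accounting exactly for the three translations. The main obstacle is the sector $\ell=0$: one must prove $\ker L_0=\{0\}$, which requires combining a Pohozaev identity with the variational characterization of $U$ as a radial mountain pass critical point in order to rule out any purely radial element of $\ker L$. This last step, carried out by Tod--Moroz and later refined by Lenzmann and by Wei--Winter, is the non-trivial heart of the theorem.
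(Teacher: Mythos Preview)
The paper does not prove this theorem at all: its entire proof is a two-line citation, referring to \cite{MT} for existence (and implicitly uniqueness and decay) and to \cite{WW} for the non-degeneracy statement. Your proposal is therefore not so much a different route as an actual sketch of the content of those references, and as such it is broadly correct and considerably more informative than what the paper supplies.

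A few small corrections on attribution and detail. Uniqueness of the positive radial ground state is Lieb's 1977 theorem, not Tod--Moroz; the latter is primarily an existence/analysis paper. In the non-degeneracy argument of Wei--Winter the $\ell=0$ sector is handled not via a Pohozaev identity but via the one-parameter scaling family $U_\lambda(x)=\lambda^2 U(\lambda x)$, which satisfies $-\Delta U_\lambda + \lambda^2 U_\lambda = (W*U_\lambda^2)U_\lambda$; differentiating in $\lambda$ produces a function that would lie in $\ker L_0$ if the radial kernel were nontrivial, and a sign/Wronskian analysis then rules this out. Finally, Lenzmann's non-degeneracy result concerns the pseudo-relativistic Hartree (boson star) equation, not the present Newtonian one, so it is not directly relevant here. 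None of these points is a genuine gap in your mathematics; your spherical-harmonic decomposition and the treatment of the sectors $\ell\ge 2$, $\ell=1$, $\ell=0$ is exactly the architecture of the Wei--Winter proof that the paper invokes.
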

\begin{proof}
The existence of $U$ is proved in \cite{MT}. The second part of the Theorem is proved in \cite{WW}.
\end{proof}
\begin{remark} \label{rem:9}
Since the unique solution $U$ of Theorem \ref{th:2} has a mountain--pass characterization, it is not hard to refine the statement as follows: the Euler--Lagrange functional associated to the equation solved by $U$ has a second derivative that is negative--definite along the direction $U$, and positive--definite on the orthogonal of the subspace defined by the (three) partial derivatives of $U$ itself.
\end{remark}
\begin{remark}
The exponential decay of the solution $U$ will be used to get high--order summability when needed. Many integral estimates would be more involved (and even wrong), if $U$ had just a power-like decay at infinity. We refer to the comments at the end of the last section.
\end{remark}
For every $a>0$, the function $U_a(x)=a U(\sqrt{a}\, x)$ is the unique solution of
\begin{equation*}
-\Delta U_a + a U_a = \left( W * |U_a|^2 \right) U_a \quad \hbox{in $\mathbb{R}^3$}.
\end{equation*}
Given $\xi \in \mathbb{R}^3$, we set
\begin{equation*}
z_{\varepsilon, \xi}(x) = U_{V(\varepsilon \xi)} (x-\xi) = V(\varepsilon \xi) U \left( \sqrt{V(\varepsilon \xi)} \, (x-\xi) \right).
\end{equation*}
Hence $z_{\varepsilon,\xi}$ solves the \emph{limiting equation}
\begin{equation}\label{eq:16}
-\Delta z_{\varepsilon,\xi} + V(\varepsilon \xi) z_{\varepsilon,\xi} = \left( W * |z_{\varepsilon,\xi}|^2 \right) z_{\varepsilon,\xi} \quad \hbox{in $\mathbb{R}^3$}.
\end{equation}
Although (\ref{eq:7}) cannot be set in $H^1(\mathbb{R}^3)$ because $V$ can be arbitrarily small, not all is lost: $z_{\ge,\xi}$ is \textit{almost} a critical point of $I_\ge$. This is rather standard if $V$ is far from zero (see \cite{AM,AMS}), but it is still true under our assumptions. See Lemma \ref{lem:10} below.

Therefore, we will look for a solution of (\ref{eq:7}) near the manifold
\begin{equation*}
Z=Z_\varepsilon=\left\{ z_{\varepsilon, \xi} \mid \xi \in \mathbb{R}^3, \ |\varepsilon \xi| < 1 \right\}.
\end{equation*}
To this aim, we will perform a finite-dimensional reduction.
If $P=P_{\ge,\xi}$ stands for the orthogonal projection of $E$ onto $(T_{z_{\varepsilon,\xi}} Z )^\perp$, we will first solve the auxiliary equation
\begin{equation} \label{eq:17}
P \nabla I_\varepsilon (z_{\varepsilon,\xi}+w)=0
\end{equation}
with respect to $w=w(\ge,\xi)$ in some suitable subspace orthogonal to $Z$. Since we can prove that $\tilde{Z} = \{z_{\ge,\xi}+w \}$ is a natural constraint for $I_\varepsilon$, we will show that an isolated local minimum or maximum point $x_0$ of $V$ generates a critical point of the constrained functional $I_\varepsilon$ on $\tilde{Z}$. Thus our main existence result will be proved. This perturbation method is a modification of that introduced in \cite{AMS}, and follows closely \cite{AMR}. Since the only difference with \cite{AMR} lies in the convolution term, we will often be sketchy on those arguments that do not involve it, and refer to \cite{AMR} for the details.

\section{Solving the auxiliary equation}

We recall an analytic estimate on $V$. We omit the proof, since it appears in \cite{AMR}.

\begin{lemma} \label{lem:1}
Let $\alpha>0$, suppose that $V(x)>a |x|^{-\alpha}$ for $|x|>1$, and let $m>0$ be given. Then there exist $\varepsilon_0>0$ and $R>0$ such that
\[
V(\varepsilon x + y) \geq \frac{m}{|x|^\alpha}
\]
whenever $|x|\geq R$, $\varepsilon \leq \varepsilon_0$, $y \in \mathbb{R}^3$ and $|y| \leq 1$.
\end{lemma}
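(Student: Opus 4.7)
The plan is to establish the estimate by a direct triangle--inequality argument, splitting into two regimes according to whether $\varepsilon x + y$ lies outside or inside the unit ball, where the hypothesis on $V$ provides a polynomial lower bound in the first regime and (V1) supplies a uniform positive lower bound in the second.

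First I would observe that by the triangle inequality
\[
|\varepsilon x + y| \leq \varepsilon |x| + |y| \leq \varepsilon|x| + 1 \leq \left(\varepsilon + \tfrac{1}{R}\right)|x|
\]
whenever $|x| \geq R$. So for $|x| \geq R$ and $\varepsilon \leq \varepsilon_0$, we control $|\varepsilon x + y|$ from above by a small constant multiple of $|x|$, provided $\varepsilon_0$ is small and $R$ is large.

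In the regime $|\varepsilon x + y| > 1$, the hypothesis gives $V(\varepsilon x + y) > a|\varepsilon x + y|^{-\alpha}$, so the desired estimate $V(\varepsilon x + y) \geq m|x|^{-\alpha}$ reduces to
\[
|\varepsilon x + y| \leq \left(\tfrac{a}{m}\right)^{1/\alpha} |x|,
\]
which by the bound above is ensured by choosing $\varepsilon_0$ and $1/R$ small enough that $\varepsilon_0 + 1/R \leq (a/m)^{1/\alpha}$. In the complementary regime $|\varepsilon x + y| \leq 1$, I would invoke (V1), which yields $V(\varepsilon x + y) \geq A_0/(1+1) = A_0/2$; enlarging $R$ if necessary so that $R^\alpha \geq 2m/A_0$, this already dominates $m/|x|^\alpha$ for every $|x| \geq R$.

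Combining the two cases and taking $\varepsilon_0$ and $R$ to satisfy all the above threshold conditions simultaneously completes the argument. There is really no obstacle here: the estimate is a routine consequence of the triangle inequality together with the given pointwise bounds on $V$, which is presumably why the authors refer to \cite{AMR} rather than reproducing the proof.
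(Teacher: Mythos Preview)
Your argument is correct. The paper itself omits the proof entirely and simply refers to \cite{AMR}, so there is no in-paper proof to compare against; your direct triangle-inequality argument, splitting according to whether $|\varepsilon x+y|$ exceeds $1$ and invoking the standing assumption (V1) for the inner region, is exactly the elementary verification one expects.
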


Next, we prove that $Z$ is a manifold of \textit{quasi-solutions} of (\ref{eq:7}). We begin with a useful fact that will be systematically exploited hereafter. Since $z_{\varepsilon,\xi}$ decays exponentially fast at infinity, we can (and do) choose $\bar{c}$ and $\theta$ in such a way that $T_\varepsilon z_{\varepsilon,\xi}=z_{\varepsilon,\xi}$. Therefore, we can simply write $z_{\varepsilon,\xi}$ when we work with $I_\varepsilon$ on $Z$.

\begin{lemma} \label{lem:10}
There is a constant $\bar{C}>0$ such that $\|\nabla I_\varepsilon (z_{\varepsilon,\xi})\| \leq \bar{C} \varepsilon$ whenever $|\varepsilon \xi| \leq 1$.
\end{lemma}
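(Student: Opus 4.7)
The plan is to exploit that $z_{\varepsilon,\xi}$ is an exact solution of the limiting equation (\ref{eq:16}) in order to reduce $\nabla I_\varepsilon(z_{\varepsilon,\xi})$ to an integral whose smallness is governed entirely by the difference $V(\varepsilon x)-V(\varepsilon\xi)$. The exponential decay of $U$ from Theorem \ref{th:2} guarantees that $\bar c$ and $\theta$ can be chosen so that $z_{\varepsilon,\xi}(x)<\bar c(1+\varepsilon|x|)^{-\theta}$ pointwise, with room to spare; consequently $T_\varepsilon$ acts as the identity in an $E$-neighbourhood of $z_{\varepsilon,\xi}$, $T_\varepsilon z_{\varepsilon,\xi}=z_{\varepsilon,\xi}$, and the non-local part of $I_\varepsilon$ is Fréchet differentiable at $z_{\varepsilon,\xi}$ in the untruncated form. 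This gives
\[
\langle \nabla I_\varepsilon(z_{\varepsilon,\xi}),v\rangle = \int \nabla z_{\varepsilon,\xi}\cdot\nabla v + \int V(\varepsilon x)\,z_{\varepsilon,\xi}\, v - \int\!\!\int W(x-y)\,z_{\varepsilon,\xi}^2(y)\,z_{\varepsilon,\xi}(x)\,v(x)\,dx\,dy.
\]
Testing (\ref{eq:16}) against $v$ and subtracting kills the gradient term and the non-local term simultaneously, leaving the clean identity
\[
\langle \nabla I_\varepsilon(z_{\varepsilon,\xi}),v\rangle = \int \bigl(V(\varepsilon x)-V(\varepsilon\xi)\bigr)\,z_{\varepsilon,\xi}(x)\,v(x)\,dx.
\]

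To extract the factor $\varepsilon$, I would invoke (V2) for the Lipschitz bound $|V(\varepsilon x)-V(\varepsilon\xi)|\le V_1\varepsilon|x-\xi|$ and then apply Cauchy--Schwarz with weight $V(\varepsilon x)$:
\[
|\langle \nabla I_\varepsilon(z_{\varepsilon,\xi}),v\rangle| \le V_1\varepsilon \left(\int \frac{|x-\xi|^2\, z_{\varepsilon,\xi}(x)^2}{V(\varepsilon x)}\,dx\right)^{1/2} \|v\|_E.
\]
It remains to bound the weighted integral uniformly for $|\varepsilon\xi|\le 1$. From (V1), $V(\varepsilon x)^{-1}\le A_0^{-1}(1+|\varepsilon x|^\alpha)\le C(1+\varepsilon^\alpha|x-\xi|^\alpha)$, since $|\varepsilon\xi|\le 1$. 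The change of variables $y=\sqrt{V(\varepsilon\xi)}(x-\xi)$ turns $z_{\varepsilon,\xi}$ into a dilated copy of $U$, and reduces the integral to a constant multiple of $V(\varepsilon\xi)^{-1/2}\int |y|^2(1+|y|)^{\alpha}U(y)^2\,dy$, which is finite thanks to the exponential decay of $U$. Since $|\varepsilon\xi|\le 1$ forces $V(\varepsilon\xi)\ge A_0/2$, the whole bound is uniform in $\xi$, and taking the supremum over $v$ with $\|v\|_E\le 1$ delivers $\|\nabla I_\varepsilon(z_{\varepsilon,\xi})\|\le \bar C\varepsilon$.

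The only delicate point, and the one I would scrutinise first, is precisely the weighted Cauchy--Schwarz step: because $V$ may vanish at infinity, any naive attempt to bound $\int |x-\xi|\,z_{\varepsilon,\xi}\,|v|$ directly by $\|v\|_{L^2}$ fails, and the weighted version forces us to integrate the \emph{inverse} weight $V(\varepsilon x)^{-1}$ against $z_{\varepsilon,\xi}^2$. This is the stage at which the polynomial lower bound in (V1) and the exponential decay of $U$ must cooperate; had $U$ only polynomial decay, one would be forced to restrict $\alpha$. The restriction $|\varepsilon\xi|\le 1$ appearing in the definition of $Z$ is not cosmetic either: it is exactly what keeps the rescaling parameter $V(\varepsilon\xi)$ in a compact subset of $(0,A_1]$, so that the rescaled profile $U(\sqrt{V(\varepsilon\xi)}(\cdot-\xi))$ never degenerates along the manifold.
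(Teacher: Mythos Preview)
Your proof is correct and follows exactly the paper's approach: reduce $\nabla I_\varepsilon(z_{\varepsilon,\xi})$ to the single integral $\int (V(\varepsilon x)-V(\varepsilon\xi))\,z_{\varepsilon,\xi}\,v$ via the limiting equation, then extract the factor $\varepsilon$ from the Lipschitz bound (V2) and control the remaining integral using the exponential decay of $U$ together with the polynomial lower bound (V1). The paper simply defers the weighted Cauchy--Schwarz step to \cite[Lemma~3]{AMR}, whereas you have written it out explicitly and correctly identified why the constraint $|\varepsilon\xi|\le 1$ and the condition $\alpha\le 2$ matter.
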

\begin{proof}
Fix any $v \in E$. Then
\begin{multline*}
\nabla I_\varepsilon (z_{\varepsilon,\xi})v = \int \left( \nabla z_{\varepsilon,\xi} \cdot \nabla v + V(\varepsilon x) z_{\varepsilon,\xi} v \right) \\
{}- \int W(x-y) z_{\varepsilon,\xi}(x)^2 z_{\varepsilon,\xi}(y) v(y) \, dx \, dy
\end{multline*}
and from
\[
-\Delta z_{\varepsilon,\xi} + V(\varepsilon \xi) z_{\varepsilon,\xi} = \left( W*|z_{\varepsilon,\xi}|^2 \right) z_{\varepsilon,\xi}
\]
we get
\[
\left| \nabla I_\varepsilon (z_{\varepsilon,\xi})v \right| \leq \left| \int \left[ V(\varepsilon x) - V(\varepsilon \xi) \right] 
z_{\varepsilon,\xi}(x)^2 v(x) \, dx \right| .
\]
We have cleared the convolution term, and the proof reduces now to the arguments and the estimates of \cite[Lemma 3]{AMR}: we apply the Intermediate Value Theorem to the difference $V(\varepsilon x) - V(\varepsilon \xi)$, and we exploit the exponential decay of $z_{\ge,\xi}$ to get the necessary integrability. We omit the details.
\end{proof}

\begin{remark}
 The homogeneity of the non-local term (in the double-integral form) will often simplify our calculations. In this respect, the local nonlinearity may be harder to treat than our convolution term.
\end{remark}

The next Lemma takes care of another technical ingredient of the perturbation technique.

\begin{lemma} \label{lem:3}
The operator $\nabla^2 I_\varepsilon (z_{\varepsilon,\xi}) \colon E \to E$ is a compact perturbation of the identity operator.
\end{lemma}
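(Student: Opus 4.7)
The plan is to compute the Fréchet second derivative of $I_\varepsilon$ at $z = z_{\varepsilon,\xi}$ explicitly and then show that the nonlinear contribution defines a compact operator on $E$ by truncating $z$. Since $\bar c$ and $\theta$ were chosen so that $|z(x)|(1+\varepsilon|x|)^\theta < \bar c$ strictly, the cutoff $\mu$ satisfies $\mu \equiv 1$ and $\mu'=\mu''=0$ at each $(x,z(x))$, whence the chain rule gives $DT_\varepsilon(z)[h]=h$ and $D^2T_\varepsilon(z)=0$. Differentiating the nonlinear part of $I_\varepsilon$ twice then yields
\begin{equation*}
\nabla^2 I_\varepsilon(z)[h,k] = \langle h,k\rangle_E - \int\!\!\int W(x-y)\bigl( z(x)^2 h(y) k(y) + 2\, z(x) h(x) z(y) k(y)\bigr)\,dx\,dy.
\end{equation*}
By Riesz representation, the subtracted bilinear form equals $\langle Kh,k\rangle_E$ for some bounded $K\in\mathcal{L}(E)$, and the task reduces to proving that $K$ is compact.

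Decompose $K = K_1 + 2K_2$ according to the two summands. Let $\chi_R$ be a smooth radial cutoff of $B_R(\xi)$, set $z_R = z\chi_R$, and define $K_i^R$ by the same bilinear formulas with $z$ replaced by $z_R$. The exponential decay of $z_{\varepsilon,\xi}$ forces $\|z-z_R\|_{L^p(\mathbb{R}^3)}\to 0$ as $R\to\infty$ for every $p\ge 1$, and the Hardy--Littlewood--Sobolev inequality of Proposition \ref{prop:HLS} (applied with $r=s=6/5$, just as in the proof of Lemma \ref{lem:10}) produces $\|K_i - K_i^R\|_{\mathcal{L}(E)} \to 0$. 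Since compact operators form a norm-closed subspace of $\mathcal{L}(E)$, it suffices to prove that each $K_i^R$ is compact.

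To do so, take $v_n \rightharpoonup 0$ in $E$; the goal is $\sup_{\|w\|_E\le 1}|\langle K_i^R v_n,w\rangle_E|\to 0$. The sequence $\{v_n\}$ is bounded in $D^{1,2}(\mathbb{R}^3)$, hence in $L^6(\mathbb{R}^3)$; because $V(\varepsilon\cdot)$ is continuous and strictly positive on the compact set $\overline{B_{2R}(\xi)}$, the $E$-norm controls the $L^2$-norm there, so $\{v_n\}$ is bounded in $H^1(B_{2R}(\xi))$ and the Rellich--Kondrachov theorem provides $v_n\to 0$ strongly in $L^p(B_{2R}(\xi))$ for every $p\in[1,6)$. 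Since $z_R$ is supported in $B_R(\xi)$, the bilinear forms defining $K_i^R$ see $v_n$ only through $v_n\chi_{2R}$ (for $K_1^R$) or through $W*(z_R v_n)$ (for $K_2^R$); inserting the local $L^p$-convergence into the HLS bounds from the previous step yields the required convergence uniformly in $\|w\|_E\le 1$. The principal obstacle is precisely that $E$ fails to embed into $L^2(\mathbb{R}^3)$ globally—because $V(\varepsilon x)$ is permitted to decay at infinity—so no global Rellich-type compactness is directly available; the remedy is the combination of (i) the exponential decay of $z$, which localizes the kernel of the bilinear form to a neighborhood of $\xi$ up to a norm-small error, and (ii) the strict local positivity of $V$, which restores the local $L^2$-control needed for Rellich on compact subsets, exactly as in the scalar argument of \cite{AMR}.
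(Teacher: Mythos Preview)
Your overall strategy---localize via the exponential decay of $z_{\varepsilon,\xi}$ and then invoke Rellich on balls---is exactly the paper's, just repackaged as ``norm-limit of compact operators'' rather than a direct weak-convergence computation. For the cross term $K_2(v,w)=\iint W(x-y)\,z(x)v(x)\,z(y)w(y)$ your argument is correct and equivalent to the paper's.

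There is, however, a genuine gap in your treatment of the diagonal term $K_1(v,w)=\iint W(x-y)\,z(x)^2\,v(y)w(y)=\int (W*z^2)(y)\,v(y)w(y)\,dy$. Your claim that ``$K_1^R$ sees $v_n$ only through $v_n\chi_{2R}$'' is false: replacing $z$ by the compactly supported $z_R$ localizes the $x$-integration, but the resulting potential $(W*z_R^2)(y)$ is \emph{not} compactly supported---it behaves like $c/|y-\xi|$ at infinity---so the $y$-integration in $K_1^R(v_n,w)$ still ranges over all of $\mathbb{R}^3$ and feels the full tail of $v_n$. Consequently neither the operator-norm estimate $\|K_1-K_1^R\|\to 0$ nor the compactness of $K_1^R$ follows from the HLS bound you quote: with $v,w$ controlled only in $L^6$ you get $vw\in L^3$, and there is no admissible HLS pairing of $z_R^2$ against an $L^3$ function through the kernel $|x-y|^{-1}$.

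The paper's own proof treats $K_2$ in detail and then dispatches $K_1$ with ``by repeating these arguments''; note that the displayed formula there reads $z(x)^2 v(x)w(y)$, for which the repetition \emph{does} work verbatim, whereas the correct expression (which you wrote, and which the paper itself uses one lemma later) has $v(y)w(y)$. To close the gap for the correct $K_1$ you must control $\int (W*z^2)|v||w|$ on the tail $\{|y-\xi|>R\}$ uniformly in $\|v\|_E,\|w\|_E\le 1$; this requires more than $L^6$-control, and the natural route is to combine Hardy's inequality $\int |y-\xi|^{-2}|v|^2\le 4\int|\nabla v|^2$ with the weighted-$L^2$ part of the $E$-norm (using that $V(\varepsilon y)$ is bounded below on $\{|y-\xi|\lesssim 1/\varepsilon\}$) to dominate the slowly decaying weight $(W*z^2)(y)\sim |y-\xi|^{-1}$.
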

\begin{proof}
The second derivative $\nabla^2 I_\varepsilon (z_{\varepsilon,\xi})$ acts on generic elements $v$, $w \in E$ as
\begin{equation} \label{eq:nabla2}
\langle \nabla^2 I_\varepsilon (z_{\varepsilon,\xi})v,w\rangle = \int \left( \nabla v \cdot \nabla w + V(\varepsilon x) vw \right) \, dx - K(v,w),
\end{equation}
where
\begin{multline} \label{eq:nabla2bis}
K(v,w)=\int W(x-y) z_{\varepsilon,\xi}(x)^2 v(x) w(y) \, dx \, dy \\
{}+2 \int W(x-y)  z_{\varepsilon,\xi}(y) v(y) z_{\varepsilon,\xi}(x) w(x) \, dx \, dy.
\end{multline}
We need to prove that $K$ is a compact operator. Take any couple of sequences $\{v_m\}$, $\{w_m\}$ in $E$, such that  $\|v_m\| \leq 1$, $\|w_m\| \leq 1$, $v_m \rightharpoonup v_0$ and $w_m \rightharpoonup w_0$ (weak convergence in $E$). Without loss of generality, we assume $v_0=0=w_0$. We are going to show that $K(v_m,w_m) \to 0$. Consider first
\begin{equation*}
\left| \int W(x-y)  z_{\varepsilon,\xi}(y) v(y) z_{\varepsilon,\xi}(x) w(x) \, dx \, dy \right| \leq C \|z_{\varepsilon,\xi} v_m\|_{\frac{6}{5}} \|z_{\varepsilon,\xi}w_m\|_{\frac{6}{5}}.
\end{equation*}
Here we have chosen $p=t$ in the Hardy--Littlewood--Sobolev inequality. Now,
\begin{equation}
\|z_{\varepsilon,\xi} v_m\|_{\frac{6}{5}}^{\frac{6}{5}} = \int_{|x-\xi|<R} |z_{\varepsilon,\xi}(x) v_m(x)|^{\frac{6}{5}}\, dx + \int_{|x-\xi|\geq R} |z_{\varepsilon,\xi}(x) v_m(x)|^{\frac{6}{5}}\, dx.
\end{equation}
Since $v_m \in H^1 (B(\xi,R))$, we have $v_m \to 0$ strongly in $L^2(B(\xi,R))$. The H\"{o}lder inequality implies then
\begin{multline*}
\int_{|x-\xi|<R} |z_{\varepsilon,\xi}(x) v_m(x)|^{\frac{6}{5}}\, dx \leq \\
\left( \int_{|x-\xi|<R} |v_m(x)|^2 \, dx
\right)^\frac{3}{5}
\left(
\int_{|x-\xi|<R} |z_{\varepsilon,\xi}(x)|^3 \, dx
\right)^\frac{2}{5} 
= o(1)
\end{multline*}
as $m \to +\infty$. To estimate the integral outside the ball $B(\xi,R)$, pick $\eta>0$: there exists $R>0$ such that
\begin{multline*}
\int_{|x-\xi|\geq R} |z_{\varepsilon,\xi}(x) v_m(x)|^{\frac{6}{5}}\, dx \leq \\
\left( \int_{|x-\xi|\geq R} |v_m(x)|^6 \, dx
\right)^\frac{1}{5}
\left(
\int_{|x-\xi|\geq R} |z_{\varepsilon,\xi}(x)|^\frac{3}{2} \, dx
\right)^\frac{4}{5} 
\leq C \eta
\end{multline*}
thanks to the exponential decay of $z_{\varepsilon,\xi}$ and the boundedness of $\{v_m\}$. The term with $w_m$ is perfectly symmetric, and in conclusion
\begin{displaymath}
\limsup_{m \to +\infty} \left| \int W(x-y)  z_{\varepsilon,\xi}(y) v(y) z_{\varepsilon,\xi}(x) w(x) \, dx \, dy \right| \leq C \eta^\frac{5}{6}.
\end{displaymath}
By repeating these arguments, we can show that
 \begin{displaymath}
\limsup_{m \to +\infty} \left| \int W(x-y) z_{\varepsilon,\xi}(x)^2 v(x) w(y) \, dx \, dy \right| \leq C \eta^\frac{5}{6},
\end{displaymath}
and since $\eta>0$ is arbitrary, the proof is complete.
\end{proof}
\begin{remark}
 The argument contained in the proof just finished, and based on Proposition \ref{prop:HLS}, will be used several times in the sequel. 
\end{remark}
We come now to the most important step of this section: the invertibility of the gradient $\nabla I_\varepsilon (z_{\varepsilon,\xi})$ along orthogonal directions to the manifold $Z$. This will be of crucial importance when trying to solve the auxiliary equation (\ref{eq:17}) with respect to $w$.

The following Lemma is technical, and we recall it here for the sake of completeness. Its proof can be found in \cite{AMS}, Claim 1 inside the proof of Lemma 5.
\begin{lemma} \label{lem:tech1}
There exist $R \in (\sqrt[4]{\varepsilon},\sqrt{\varepsilon})$ such that
\[
\int_{R<|x-\xi|<R+1} |\nabla v|^2 + |v|^2 < C \sqrt{\varepsilon} \|v\|^2
\]
for every $v \perp \operatorname{span} \left\{
z_{\varepsilon,\xi},\frac{\partial z_{\varepsilon,\xi}}{\partial \xi_1},\frac{\partial z_{\varepsilon,\xi}}{\partial \xi_2}, \frac{\partial z_{\varepsilon,\xi}}{\partial \xi_3}
\right\}$.
\end{lemma}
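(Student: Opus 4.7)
The plan is a pigeonhole/Fubini averaging argument on annular regions around $\xi$, in the spirit of Claim~1 inside the proof of Lemma~5 of \cite{AMS}. I would define, for each $R>0$,
\[
f(R) = \int_{R<|x-\xi|<R+1} \bigl( |\nabla v|^2 + |v|^2 \bigr) dx,
\]
and observe, via Fubini's theorem, that for every $a<b$,
\[
\int_a^b f(R)\,dR \leq \int_{\{a<|x-\xi|<b+1\}} \bigl( |\nabla v|^2 + |v|^2 \bigr) dx,
\]
because for each $x\in \mathbb{R}^3$ the set of $R$'s placing $x$ in the annulus has measure at most one. Thus, if the right-hand side can be dominated by $C\|v\|^2$ on a suitable region, the mean value theorem yields some $R\in(a,b)$ with $f(R)\leq C(b-a)^{-1}\|v\|^2$; choosing the interval $(a,b)$ of length of order $\varepsilon^{-1/2}$ turns this into $f(R)\leq C\sqrt{\varepsilon}\|v\|^2$.

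The main obstacle, which is exactly what is new with respect to the classical setup of \cite{AMS}, is that the weighted norm $\|v\|_E^2 = \int(|\nabla v|^2 + V(\varepsilon x)|v|^2)\,dx$ does not a priori control $\int |v|^2$, because under (V1) the potential $V(\varepsilon x)$ may decay at infinity. The way out is that the annuli of interest lie within a bounded region of the \emph{physical} variable $\varepsilon x$: since $|\varepsilon\xi|<1$ and the relevant radii are of order at most $\varepsilon^{-1/2}$, one has $|\varepsilon x|\leq |\varepsilon\xi| + \varepsilon R \leq 2$ for $\varepsilon$ small. Assumption (V1) then furnishes a uniform lower bound $V(\varepsilon x)\geq c_0 > 0$ on this region, hence
\[
\int_{\{|x-\xi|<C\varepsilon^{-1/2}\}} \bigl( |\nabla v|^2 + |v|^2 \bigr)\, dx \leq (1+c_0^{-1})\,\|v\|^2.
\]

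With this local $L^2$ control secured, the Fubini step produces the desired $R$ in the stated range. The orthogonality of $v$ to $\operatorname{span}\{z_{\varepsilon,\xi},\partial z_{\varepsilon,\xi}/\partial \xi_j\}$ is not genuinely needed in the averaging itself; it is built into the statement because Lemma~\ref{lem:tech1} will be invoked only for $v$ lying in the normal space to the tangent space of $Z$, which is the setting required by the invertibility result for $\nabla^2 I_\varepsilon(z_{\varepsilon,\xi})$ on $(T_{z_{\varepsilon,\xi}}Z)^{\perp}$ to be proved in the next step. The only genuinely new ingredient, compared to the bounded-$V$ case of \cite{AMS}, is thus the extraction of a uniform positive lower bound on $V(\varepsilon x)$ over a bounded ball in the physical variable, which is granted by (V1) together with the constraint $|\varepsilon\xi|<1$.
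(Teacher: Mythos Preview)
Your pigeonhole/averaging argument is correct and is exactly the approach of Claim~1 in the proof of Lemma~5 of \cite{AMS}, which is all the paper invokes for this lemma. You also correctly isolate the one extra point needed for decaying $V$: on the relevant ball $\{|x-\xi|\lesssim \varepsilon^{-1/2}\}$ one has $|\varepsilon x|\leq 2$ (since $|\varepsilon\xi|<1$), so (V1) yields $V(\varepsilon x)\geq c_0>0$ and the unweighted $L^2$-mass is controlled by $\|v\|_E^2$; the orthogonality hypothesis indeed plays no role in the averaging itself.
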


\begin{lemma}
For some $C'>0$ and any $\varepsilon \ll 1$, $P\nabla I_\varepsilon (z_{\varepsilon,\xi})$ is uniformly invertible for every $\xi \in \mathbb{R}^3$ such that $|\varepsilon \xi| \leq 1$ and $\|\left( P\nabla I_\varepsilon (z_{\varepsilon,\xi}) \right)^{-1}\| \leq C'$.
\end{lemma}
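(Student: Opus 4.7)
The plan is to establish the coercivity estimate
\[
\langle \nabla^2 I_\varepsilon(z_{\varepsilon,\xi}) v, v\rangle \geq c\|v\|^2
\]
for every $v$ orthogonal to $\operatorname{span}\{z_{\varepsilon,\xi},\partial z_{\varepsilon,\xi}/\partial\xi_1,\partial z_{\varepsilon,\xi}/\partial\xi_2,\partial z_{\varepsilon,\xi}/\partial\xi_3\}$, with $c>0$ independent of $\varepsilon$ (for $\varepsilon$ small) and of $\xi$ satisfying $|\varepsilon\xi|\leq 1$. By Lemma \ref{lem:3}, $\nabla^2 I_\varepsilon(z_{\varepsilon,\xi})$ is a compact perturbation of the identity on $E$, so after composing with $P$ such a coercivity estimate translates, via the Fredholm alternative, into the uniform bound $\|(P\nabla^2 I_\varepsilon(z_{\varepsilon,\xi}))^{-1}\|\leq 1/c$ on $(T_{z_{\varepsilon,\xi}}Z)^\perp$.

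To establish coercivity I would localize via Lemma \ref{lem:tech1}: pick $R=R(\varepsilon)\in(\varepsilon^{1/4},\varepsilon^{1/2})$ and a smooth cutoff $\chi_R$ equal to $1$ on $B(\xi,R)$ and $0$ outside $B(\xi,R+1)$, with $|\nabla\chi_R|$ uniformly bounded. Decompose $v=v_1+v_2$ with $v_1:=\chi_R v$ and $v_2:=(1-\chi_R)v$. Lemma \ref{lem:tech1} yields $\|v\|^2=\|v_1\|^2+\|v_2\|^2+O(\sqrt\varepsilon)\|v\|^2$, and it also controls the local cross-term in $\langle\nabla^2 I_\varepsilon(z_{\varepsilon,\xi})v_1,v_2\rangle$; the non-local cross contributions inside $K(v_1,v_2)$ are bounded via Proposition \ref{prop:HLS} combined with the exponential decay of $z_{\varepsilon,\xi}$, following the pattern already developed in the proof of Lemma \ref{lem:3}.

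For the exterior piece $v_2$, supported in $\{|x-\xi|\geq R\}$ with $R\geq\varepsilon^{1/4}$, exponential decay of $z_{\varepsilon,\xi}$ makes $\|z_{\varepsilon,\xi}\|_{L^p(\operatorname{supp} v_2)}$ decay faster than any power of $\varepsilon$; Hardy--Littlewood--Sobolev then gives $K(v_2,v_2)=o(1)\|v_2\|^2$, whence $\langle\nabla^2 I_\varepsilon(z_{\varepsilon,\xi})v_2,v_2\rangle\geq(1-o(1))\|v_2\|^2$. For the interior piece $v_1$, translate $\tilde v_1(y):=v_1(\xi+y)$; on $\operatorname{supp}\tilde v_1$ we have $|y|\leq R+1\leq 2\sqrt\varepsilon$, so (V2) yields $V(\varepsilon\xi+\varepsilon y)=V(\varepsilon\xi)+O(\varepsilon^{3/2})$, and the bilinear form collapses to the unperturbed linearization associated with (\ref{eq:16}) at $\xi$, up to an $o(1)\|v\|^2$ error. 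The orthogonality of $v$ to $\{z_{\varepsilon,\xi},\partial z_{\varepsilon,\xi}/\partial\xi_j\}$, together with the exponential decay of these basis vectors outside $B(\xi,R)$, ensures that $\tilde v_1$ is $o(1)$-orthogonal in $H^1(\mathbb{R}^3)$ to $\operatorname{span}\{U_{V(\varepsilon\xi)},\partial_j U_{V(\varepsilon\xi)}\}$. Remark \ref{rem:9} then delivers $\langle L_0\tilde v_1,\tilde v_1\rangle\geq c_0\|\tilde v_1\|^2$, with $c_0>0$ uniform in $\xi$ because $V(\varepsilon\xi)$ remains in a compact subinterval of $(0,\infty)$ thanks to (V1).

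Summing the two pieces and absorbing the $O(\sqrt\varepsilon)$ and $o(1)$ errors into $c$ produces the required coercivity for $\varepsilon$ small enough, and hence the lemma. The main obstacle is the non-local cross-term coupling $v_1$ and $v_2$, compounded by the mismatch between the orthogonality conditions imposed in the $E$-norm and the Hilbert structure of $H^1(\mathbb{R}^3)$ in which Remark \ref{rem:9} is naturally phrased; both are handled by systematic use of Proposition \ref{prop:HLS} and the exponential decay provided by Theorem \ref{th:2}.
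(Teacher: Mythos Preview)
Your overall strategy coincides with the paper's: reduce to the coercivity estimate (\ref{eq:21}) on $X^\perp$, split $v=v_1+v_2$ via the cutoff $\chi_R$ from Lemma~\ref{lem:tech1}, control the exterior piece with exponential decay of $z_{\varepsilon,\xi}$ plus Hardy--Littlewood--Sobolev, and reduce the interior piece to the constant-coefficient linearization treated by Theorem~\ref{th:2} and Remark~\ref{rem:9}. (The paper also records the negativity $\langle\nabla^2 I_\varepsilon(z_{\varepsilon,\xi})z_{\varepsilon,\xi},z_{\varepsilon,\xi}\rangle<0$, which is needed because $P$ projects onto $(T_{z_{\varepsilon,\xi}}Z)^\perp$, a space strictly larger than $X^\perp$; you skip this step, but it is short.)

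The genuine problem is a scale error. You take $R\in(\varepsilon^{1/4},\varepsilon^{1/2})$ literally from the statement of Lemma~\ref{lem:tech1}, but for $\varepsilon<1$ that interval is empty; the intended range (forced by the pigeonhole argument behind the lemma and by the paper's own use of it) is $R\in(\varepsilon^{-1/4},\varepsilon^{-1/2})$, so $R\to\infty$. With $R$ small both of your key claims collapse: (i) the assertion that $\|z_{\varepsilon,\xi}\|_{L^p(\{|x-\xi|\geq R\})}$ decays faster than any power of $\varepsilon$ is false when $R\to 0$, since that norm is then essentially the full $L^p$-norm of $z_{\varepsilon,\xi}$; and (ii) the inequality $|y|\leq R+1\leq 2\sqrt\varepsilon$ is plainly wrong, as $R+1>1$. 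With the correct scale $R\to\infty$ both steps go through exactly as you intend: outside $B(\xi,R)$ exponential decay kills the convolution terms, while inside one has $|\varepsilon y|\leq\varepsilon(R+1)=O(\sqrt\varepsilon)$, so (V2) gives $V(\varepsilon\xi+\varepsilon y)=V(\varepsilon\xi)+O(\sqrt\varepsilon)$ (not $O(\varepsilon^{3/2})$), which is precisely what the paper uses. Once the scale is fixed your argument is the paper's.
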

\begin{proof}
By Lemma \ref{lem:3}, we need to prove that in a suitable neighborhood of zero there fall no eigenvalues of $P\nabla I_\varepsilon (z_{\varepsilon,\xi})$. The proof is rather technical, and we will write down only those steps that require significant modifications to the arguments of \cite[Lemma 5]{AMR}. To begin with,  we recall that (see (\ref{eq:nabla2}) and (\ref{eq:nabla2bis}))
\begin{eqnarray*}
\langle \nabla^2 I_\varepsilon (z_{\varepsilon,\xi})v,w \rangle & = & \int \nabla v \cdot \nabla w + V(\varepsilon x) vw \notag \\
&& {-} 2 \int W(x-y)  z_{\varepsilon,\xi}(x) w(x) z_{\varepsilon,\xi}(y) v(y) \, dx \, dy \notag\\
&& {-} \int W(x-y) z_{\varepsilon,\xi}(x)^2 w(y) v(y) \, dx\, dy.
\end{eqnarray*}
Choosing $v=w=z_{\varepsilon,\xi}$ and recalling that $z_{\varepsilon,\xi}$ solves (\ref{eq:16}), we get
\begin{multline*}
\langle \nabla^2 I_\varepsilon (z_{\varepsilon,\xi})v,w \rangle = \int \left[ V(\varepsilon x) - V(\varepsilon \xi) \right] z_{\varepsilon,\xi}^2 \\
{}- 2 \int W(x-y) z_{\varepsilon,\xi}(x)^2
z_{\varepsilon,\xi}(y)^2 \, dx\, dy \leq -C \|z_{\varepsilon,\xi}\|^2 < 0.
\end{multline*}
Let us abbreviate
\begin{displaymath}
X=\operatorname{span}\left\{
z_{\varepsilon,\xi},\frac{\partial z_{\varepsilon,\xi}}{\partial \xi_1},\frac{\partial z_{\varepsilon,\xi}}{\partial \xi_2}, \frac{\partial z_{\varepsilon,\xi}}{\partial \xi_3}
\right\}.
\end{displaymath}
Again, by the exponential decay of $\frac{\partial z_{\varepsilon,\xi}}{\partial \xi_1}$ (see Theorem \ref{th:2}), $X \subset H^1(\mathbb{R}^3)$. The rest of the proof consists in showing the inequality
\begin{equation}
\label{eq:21}
\langle \nabla^2 I_\varepsilon (z_{\varepsilon,\xi})v,v\rangle \geq C \|v\|^2 \quad\hbox{for all $v \perp X$}.
\end{equation}
We will assume that $\|v\|=1$.
Next, for $R$ as in Lemma \ref{lem:tech1},  we select a cut-off function $\chi_R \colon \mathbb{R}^3 \to \mathbb{R}$ such that $\chi_R =1$ in $B(\xi,R)$, $\chi_R =0$ in $\mathbb{R}^3 \setminus B(\xi,R+1)$, and $|\nabla \chi_R|<2$. We decompose $v=v_1+v_2$, where $v_1=\chi_R v$ and $v_2 = (1-\chi_R)v$. As in \cite{AMR}, we can easily check that Lemma \ref{lem:tech1} implies
\begin{align*}
\langle v_1,v_2 \rangle &= O(\sqrt{\varepsilon}) \\
\|v_1\|^2 &= \int_{B(\xi,R+1)} |\nabla v_1|^2 + V(\ge x) |v_1|^2 + O (\sqrt{\ge}) \\
\|v_2\|^2 &= \int_{\mathbb{R}^3 \setminus B(\xi,R)} |\nabla v_2|^2 + V(\ge x) |v_2|^2 +  O (\sqrt{\ge}).
\end{align*}

To prove (\ref{eq:21}), we exploit the bilinearity of the second derivative, and split it as follows:
\begin{multline*}
\langle \nabla^2 I_\varepsilon (z_{\varepsilon,\xi})v,v\rangle = \langle \nabla^2 I_\varepsilon (z_{\varepsilon,\xi})v_1,v_1\rangle + \langle \nabla^2 I_\varepsilon (z_{\varepsilon,\xi})v_2,v_2\rangle \\
{}+2\langle \nabla^2 I_\varepsilon (z_{\varepsilon,\xi})v_1,v_2\rangle .
\end{multline*}
As in the proof of Lemma \ref{lem:3}, we can estimate
\begin{multline} \label{eq:23}
\left| \langle \nabla^2 I_\varepsilon (z_{\varepsilon,\xi})v_1,v_2\rangle \right| \leq \left| \langle v_1,v_2 \rangle \right| \\
{} + C \left| \int_{B(\xi,R+1)\setminus B(\xi,R)} W(x-y) z_{\varepsilon,\xi}(x) v(x) z_{\varepsilon,\xi} (y) v(y)\, dx\, dy \right| \\
\leq C \sqrt{\varepsilon}.
\end{multline}
The ``far-away'' term $\langle \nabla^2 I_\varepsilon (z_{\varepsilon,\xi})v_2,v_2\rangle$ is easier to study than in \cite{AMR}, due to the homogeneity of the convolution term. Indeed,
\begin{multline} \label{eq:24}
\left| \langle \nabla^2 I_\varepsilon (z_{\varepsilon,\xi})v_2,v_2\rangle \right| = \int_{\mathbb{R}^3 \setminus B(\xi,R)} |\nabla v_2|^2 + V(\varepsilon x) |v_2|^2 \\
{}- 2 \int W(x-y) z_{\varepsilon,\xi}(x)v_2(x) z_{\varepsilon,\xi}(y)v_2(y)\, dx \, dy \\
{}- \int W(x-y) z_{\varepsilon,\xi}(x)^2 v_2(x) v_2(y)\, dx \, dy \\
\geq \int_{\mathbb{R}^3 \setminus B(\xi,R)} |\nabla v_2|^2 + V(\varepsilon x) |v_2|^2 - C(R) \left( \int_{\mathbb{R}^3 \setminus B(\xi,R)} |\nabla v_2|^2 \right)^\frac{2}{5}\\
= \int_{\mathbb{R}^3 \setminus B(\xi,R)} |\nabla v_2|^2 + V(\varepsilon x) |v_2|^2 + o_\varepsilon (\|v_2\|^2) \\
\geq \frac{1}{2} \|v_2\|^2 +  O (\sqrt{\ge}).
\end{multline}
Again, we have used the Hardy--Littlewood--Sobolev inequality and the properties of $z_{\varepsilon,\xi}$. The last term to estimate is the one ``concentrated'' on $B(\xi,R)$. Now,
\begin{multline*}
\langle \nabla^2 I_\varepsilon (z_{\varepsilon,\xi})v_1,v_1 \rangle = \|v_1\|^2 -\int W(x-y) z_{\varepsilon,\xi}(x)^2 v_1(y)^2 \, dx\, dy \\
{}- 2 \int W(x-y) z_{\varepsilon,\xi}(x) v_1(x) z_{\varepsilon,\xi}(y) v_1(y) dx\, dy \\
= \int_{B(\xi,R+1)} | \nabla v_1|^2 + V(\varepsilon \xi) |v_1|^2 + \int_{B(\xi,R+1)} \left[ V(\varepsilon x) - V(\varepsilon \xi) \right] |v_1|^2 \\
{} - 2 \int W(x-y) z_{\varepsilon,\xi}(x) v_1(x) z_{\varepsilon,\xi}(y) v_1(y) dx\, dy \\
{}-\int W(x-y) z_{\varepsilon,\xi}(x)^2 v_1(y)^2 \, dx\, dy.
\end{multline*}
By the exponential decay of $z_{\varepsilon,\xi}$ and the Hardy--Littlewood--Sobolev inequality, it is easy to show that
\begin{multline*}
\langle \nabla^2 I_\varepsilon (z_{\varepsilon,\xi})v_1,v_1 \rangle = 
\int | \nabla v_1|^2 + V(\varepsilon \xi) |v_1|^2  +o_\varepsilon (1) \\
{} - 2 \int W(x-y) z_{\varepsilon,\xi}(x) v_1(x) z_{\varepsilon,\xi}(y) v_1(y) dx\, dy \\
{}-\int W(x-y) z_{\varepsilon,\xi}(x)^2 v_1(y)^2 \, dx\, dy.
\end{multline*}
It is very important, at this stage, to highlight that the convolution terms play no r\^ole, since they do not contain any potential function. This is the main difference with respect to \cite{AMR}, where the nonlinearity has a potential function $K$ in front. 

The last estimate tells us that, up to an error of size $\sqrt{\varepsilon}$, the estimate of $\langle \nabla^2 I_\varepsilon (z_{\varepsilon,\xi})v_1,v_1 \rangle$ follows from the estimate of the quadratic form
\begin{multline*}
v_1 \mapsto  
\int | \nabla v_1|^2 + V(\varepsilon \xi) |v_1|^2   \\
{} - 2 \int W(x-y) z_{\varepsilon,\xi}(x) v_1(x) z_{\varepsilon,\xi}(y) v_1(y) dx\, dy \\
{}-\int W(x-y) z_{\varepsilon,\xi}(x)^2 v_1(y)^2 \, dx\, dy.
\end{multline*}
\emph{with constant coefficient} $V(\varepsilon \xi)$. But this is the second derivative (along the direction $v_1$) of the Euler--Lagrange functional associated to the limiting equation (\ref{eq:16}). We can repeat the same arguments of \cite[Lemma 5]{AMR}, using of course the non-degeneracy for the linearized limiting problem given by Theorem \ref{th:2} (and Remark \ref{rem:9}) instead of the non-degeneracy for the linearized Schr\"{o}dinger equation, and conclude that
\begin{equation} \label{eq:25}
\langle \nabla^2 I_\varepsilon (z_{\varepsilon,\xi})v_1,v_1 \rangle \geq C \|v_1\|^2 + o_\varepsilon (1).
\end{equation}
Collecting (\ref{eq:23}), (\ref{eq:24}) and (\ref{eq:25}), and recalling that
\[
\|v_1\|^2 + \|v_2\|^2 = 1 + O(\sqrt{\varepsilon}) = \|v\|^2 + O(\sqrt{\varepsilon}),
\]
we prove (\ref{eq:21}).
\end{proof}
\begin{remark}
The non--degeneracy of the limiting problem is used in an essential way to show (\ref{eq:25}).
\end{remark}
The auxiliary equation (\ref{eq:17}) is formally equivalent to a fixed-point equation for the operator
\begin{equation}
\label{ eq:26}
S_\varepsilon (w)=w-\left( P \nabla^2 I_\varepsilon (z_{\varepsilon,\xi}) \right)^{-1} \left( P \nabla I_\varepsilon (z_{\varepsilon,\xi}+w) \right).
\end{equation}
To find $w$ such that $w=S_\varepsilon (w)$, we will construct a function set on which $S_\varepsilon$ is a contractive map.

\subsection{The contraction principle for \(S_\varepsilon\)}

As in \cite{AMR}, it is convenient to restrict the operator $S_\ge$ to a proper subspace of the 
orthogonal space $\left( T_{z_{\varepsilon,\xi}} Z \right)^\perp$, so that we can get precise estimates on its fixed points. We therefore need to construct a suitable subspace of $\left( T_{z_{\varepsilon,\xi}} Z \right)^\perp$ that is invariant under $S_\varepsilon$, and on which $S_\varepsilon$ is contractive.

The idea is to begin with radial solutions of the problem
\begin{equation} \label{eq:LR}
\left\{
\begin{array}{ll}
-\Delta u + m |x|^{-\alpha} u = f(|x|), &|x|>R \\
u(x)=1, &|x|=R \\
u(x) \to 0, &|x| \to +\infty,
\end{array}
\right.
\end{equation}
where $R>0$, $m>0$, $\alpha \in (0,2]$, and $f \colon (R,+\infty) \to \mathbb{R}$ is positive, and decays at infinity. Roughly speaking, the reason why (\ref{eq:LR}) is related to our fixed-point argument is that the qualitative behavior of the decaying potential $V$ is essentially that of $m|x|^{-\alpha}$, as described precisely by Lemma \ref{lem:1}. Looking for radially symmetric solution of (\ref{eq:LR}), and performing the change of variable $v(r)=u(r)r$, we reduce to the problem
\begin{equation} \label{eq:LR1}
\left\{
\begin{array}{ll}
-v''(r) + m r^{-\alpha} v(r) = f(r)r &(r>R) \\
v(R)=R &(r=R) \\
v(r)r^{-1} \to 0 &(r \to +\infty).
\end{array}
\right.
\end{equation}
We highlight that this is precisely equation (21) of \cite{AMR}, where a term has disappeared since in our work $n=3$. The homogeneous equation corresponding to (\ref{eq:LR1}) has a general solution spanned by two generators $v_1$ and $v_2$. Hence, the general solution of (\ref{eq:LR}) is spanned by $u_1(r)=r^{-1} v_1(r)$ and $u_2(r)=r^{-1} v_2(r)$. 

For the reader's sake, we recall some more properties of $v_1$ and $v_2$, which we borrow from \cite{AMR}; we recall that we are working in $\mathbb{R}^3$, so that some formulas become more explicit. In case $\alpha <2$, $v_1$ and $v_2$ are modified Bessel functions. If we denote by $B_\ell^I$ the modified Bessel function of first order, and by $B_\ell^K$ the modified Bessel function of second order, so that they both solve the ordinary differential equation
\[
r^2 y''(r)+r y'(r)-(r^2+\ell^2)y(r)=0,
\]
then
\begin{eqnarray*}
v_1(r)&=& \sqrt{r} \ B_\ell^K \left( \frac{2 \sqrt{m}}{2-\alpha} r^{(2-\alpha)/2} \right), \quad \ell=\frac{1}{2-\alpha} \\
v_2(r)&=& \sqrt{r} \ B_\ell^I \left( \frac{2 \sqrt{m}}{2-\alpha} r^{(2-\alpha)/2} \right), \quad \ell=\frac{1}{2-\alpha}. \\
\end{eqnarray*}
Hence, standard asymptotic properties of the Bessel functions imply that
\begin{eqnarray*}
v_1(r) &\approx& r^{\alpha/4} \exp \left( -\frac{2 \sqrt{m}}{2-\alpha} r^{\frac{2-\alpha}{4}} \right) \\
v_2 (r) &\approx& r^{\alpha/4} \exp \left( \frac{2 \sqrt{m}}{2-\alpha} r^{\frac{2-\alpha}{4}} \right)
\end{eqnarray*}
As a consequence,
\[
u_1(r) \approx r^{\frac{\alpha}{4}-1} \exp \left( -\frac{2 \sqrt{m}}{2-\alpha} r^{\frac{2-\alpha}{4}} \right)
\]
If $\alpha =2$, then
\begin{eqnarray*}
v_1(r) &=& r^{\frac{1-\sqrt{1+4m}}{2}} \\
v_2(r) &=& r^{\frac{1+\sqrt{1+4m}}{2}} \\
\end{eqnarray*}
and therefore
\begin{eqnarray*}
u_1(r) &=& r^{\frac{-1-\sqrt{1+4m}}{2}} \\
u_2(r) &=& r^{\frac{-1+\sqrt{1+4m}}{2}} \\
\end{eqnarray*}
We recall, for future reference, the following result, see \cite{AMR}.

\begin{lemma} \label{lem:16}
Let $u_1$, $u_2$ be defined as above, and let $\bar{\varphi}$ be a solution of (\ref{eq:LR}), where $f \colon (R,+\infty) \to \mathbb{R}$ is a positive, continuous function satisfying the integrability condition
\begin{equation*}
\int_R^{+\infty} r^{2} f(r) u_2(r) \, d r < +\infty.
\end{equation*}
Then there exists a constant $\gamma(R)>0$ such that $\bar{\varphi}(r) \leq \gamma(R) u_1(r)$ for all $r>R$.
\end{lemma}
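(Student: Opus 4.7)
The plan is to use reduction of order with the decaying homogeneous solution $u_1$, converting the inhomogeneous ODE underlying \eqref{eq:LR} into a first-order equation for $\psi'$ that can be integrated twice, and then to verify boundedness of the resulting $\psi$ by a Fubini/comparison argument.

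Set $\bar{\varphi}(r) = u_1(r)\psi(r)$. Because $u_1$ solves the homogeneous equation associated to the radial version of \eqref{eq:LR}, substitution eliminates the coefficient of $\psi$, and after multiplication by $r^2$ the equation collapses to the conservative form
\[
\bigl(r^2 u_1(r)\,\psi'(r)\bigr)' = -r^2 f(r).
\]
Since $u_2$ is increasing and tends to infinity, the hypothesis $\int_R^\infty r^2 f(r) u_2(r)\,dr < \infty$ already forces $\int_R^\infty s^2 f(s)\,ds < \infty$. Integrating the displayed ODE on $[r,T]$ and letting $T\to\infty$ with the integration constant chosen so that $T^2 u_1(T)\psi'(T)\to 0$---the choice that isolates the branch with vanishing $u_2$-component, which is forced by $\bar{\varphi}(r)\to 0$---yields
\[
\psi'(r) = \frac{1}{r^2 u_1(r)}\int_r^\infty s^2 f(s)\,ds \ge 0.
\]
A second integration from $R$, using $\psi(R) = 1/u_1(R)$ from the Dirichlet condition, and an application of Fubini give
\[
\psi(r) - \frac{1}{u_1(R)} \le \int_R^\infty s^2 f(s)\left(\int_R^s \frac{dt}{t^2 u_1(t)}\right)ds.
\]

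The only non-formal step, and hence the main obstacle, is the inner estimate
\[
\int_R^s \frac{dt}{t^2 u_1(t)} \le C(R)\, u_2(s),\qquad s\ge R,
\]
after which the integrability hypothesis immediately bounds $\psi$ by some $\gamma(R)>0$ and delivers $\bar{\varphi}(r) = u_1(r)\psi(r) \le \gamma(R)\, u_1(r)$. This inner estimate is read off from the explicit formulas for $u_1,u_2$ recalled in the excerpt: for $\alpha=2$ it is the elementary power computation
\[
\int_R^s t^{\beta_1 - 2}\,dt \le \frac{s^{\beta_1 - 1}}{\beta_1 - 1} = \frac{u_2(s)}{\beta_1 - 1},\qquad \beta_1 = \tfrac{1+\sqrt{1+4m}}{2},
\]
since $\beta_1 - 1 = (-1+\sqrt{1+4m})/2$ is exactly the exponent of $u_2$; for $\alpha\in(0,2)$ the Bessel asymptotics displayed in the excerpt show that $1/(t^2 u_1(t))$ grows at the same exponential rate as $u_2(t)$, and a Laplace-type estimate localized at the endpoint $t=s$ produces the bound with a polynomial correction of order $s^{-\alpha/2}$. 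In both cases $C(R)$ depends only on $R$ and $m$, so the chain of inequalities closes and furnishes the desired constant $\gamma(R)$.
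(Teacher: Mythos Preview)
The paper does not actually prove this lemma: it is quoted from \cite{AMR} and used as a black box. Your reduction--of--order strategy is the standard one (and almost certainly what \cite{AMR} does), so in spirit you are aligned with the cited argument.

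There is, however, an algebraic slip in your reduction. Substituting $\bar\varphi=u_1\psi$ into the radial equation written in self--adjoint form $-(r^{2}u')'+m\,r^{2-\alpha}u=r^{2}f$ yields
\[
\bigl(r^{2}\,u_1(r)^{2}\,\psi'(r)\bigr)'=-\,r^{2}\,u_1(r)\,f(r),
\]
not $(r^{2}u_1\psi')'=-r^{2}f$. A quick sanity check: for the homogeneous $\psi=u_2/u_1$, Abel's identity gives $r^{2}(u_1u_2'-u_1'u_2)=\text{const}$, so $r^{2}u_1^{2}(u_2/u_1)'$ is constant while $r^{2}u_1(u_2/u_1)'=\text{const}/u_1$ is not. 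After this correction the two integrations give
\[
\psi(r)-\frac{1}{u_1(R)}\;\le\;\int_R^{\infty} s^{2}\,u_1(s)\,f(s)\!\left(\int_R^{s}\frac{dt}{t^{2}\,u_1(t)^{2}}\right)\!ds,
\]
and your ``main obstacle'' evaporates: the same Wronskian identity gives the \emph{exact} formula
\(
\int_R^{s}\frac{dt}{t^{2}u_1(t)^{2}}=c^{-1}\bigl(u_2(s)/u_1(s)-u_2(R)/u_1(R)\bigr),
\)
so the right--hand side is bounded by $c^{-1}\int_R^{\infty}s^{2}f(s)\,u_2(s)\,ds$, finite by hypothesis. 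No case split on $\alpha$, no Bessel asymptotics, no Laplace estimate. Your endpoint computations of $\int_R^{s}dt/(t^{2}u_1)$ against $u_2(s)$ happen to come out compatible only because of this underlying Wronskian relation; fixing the algebra both repairs the error and shortens the proof.
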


\bigskip

For any $R>0$ fixed, we define the set $\mathcal{W}_\varepsilon (R)$ as the set of all functions $w \in E$ such that
\begin{equation*}
w(x+\xi) \leq 
\begin{cases}
\gamma(R) \sqrt{\varepsilon} \, u_1(|x|), &\mbox{if $|x| \geq R$} \\
\sqrt{\varepsilon}, &\mbox{if $|x| \leq R$},
\end{cases}
\end{equation*}
where $u_1$ is the function we have introduced above, and $\gamma(R)$ is the constant of Lemma \ref{lem:16}. Finally, we set
\begin{equation*}
\Gamma_\varepsilon (R) = \left\{
w \in E \mid \|w\|_E \leq c_0 \varepsilon , \ w \in \mathcal{W}_\varepsilon (R) \cap \left( T_{z_{\varepsilon,\xi}} Z \right)^\perp
\right\}.
\end{equation*}
We will choose a good value for the constant $c_0>0$ later (see the proof of Lemma \ref{lem:18}).

\begin{remark}
It is possible to select the constants in the truncation $T_\varepsilon$ so that
\[
\left| z_{\varepsilon,\xi}(x)+w(x) \right| < \frac{\bar{c}}{\left( 1+|\varepsilon x| \right)^\theta}
\]
for all $x \in \mathbb{R}^3$ and all $w \in \Gamma_\varepsilon (R)$.
\end{remark}
By the last remark, we are authorized to write
\begin{multline*}
I_\varepsilon (z_{\varepsilon,\xi}+w) = \frac{1}{2} \|z_{\varepsilon,\xi}+w \|_E^2 \\
{}- \frac{1}{4} \int W(x-y) |z_{\varepsilon,\xi}(x)+w(x)|^2 |z_{\varepsilon,\xi}(y)+w(y)|^2 \, dx \, dy.
\end{multline*}
Since $w \perp T_{z_{\varepsilon,\xi}} Z$, it is known that any critical point of $I_\varepsilon$ constrained to the manifold $\{z_{\varepsilon,\xi}+w \mid w \in \Gamma_\varepsilon (R) \}$ is a solution of (\ref{eq:5}). See \cite{AM,AMS} for a proof of this general fact.

We fix a value $\rho>0$ such that (see Lemma \ref{lem:1})
\begin{equation*}
V(\varepsilon x + \varepsilon \xi) \geq m |x|^{-\alpha} \quad\mbox{for $|x| \geq \rho$}.
\end{equation*}

\begin{lemma} \label{lem:18}
For $c_0$ large enough, and $\varepsilon$ sufficiently small, we have 
\[
\|S_\varepsilon (w)\| \leq c_0 \varepsilon,
\]
for all $w \in \Gamma_\varepsilon (\rho)$. In addition, $S_\varepsilon$ is a contraction.
\end{lemma}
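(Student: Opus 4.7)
The plan is to follow the classical contraction scheme, separating the norm control from the pointwise decay control that defines \(\Gamma_\varepsilon(\rho)\).

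For the norm bound, I would write \(S_\varepsilon(w)=-(P\nabla^2 I_\varepsilon(z_{\varepsilon,\xi}))^{-1}\bigl[P\nabla I_\varepsilon(z_{\varepsilon,\xi}) + N_\varepsilon(w)\bigr]\), where
\[
N_\varepsilon(w)=P\nabla I_\varepsilon(z_{\varepsilon,\xi}+w)-P\nabla I_\varepsilon(z_{\varepsilon,\xi})-P\nabla^2 I_\varepsilon(z_{\varepsilon,\xi})w
\]
collects the superlinear remainder. The previous lemma yields \(\|(P\nabla^2 I_\varepsilon(z_{\varepsilon,\xi}))^{-1}\|\le C'\), and Lemma \ref{lem:10} gives \(\|P\nabla I_\varepsilon(z_{\varepsilon,\xi})\|\le\bar C\varepsilon\). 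The remainder \(N_\varepsilon(w)\) is a sum of double-integral terms homogeneous of orders 2 and 3 in \(w\); by applying Proposition \ref{prop:HLS} exactly as in the proof of Lemma \ref{lem:3} (with factors \(z_{\varepsilon,\xi}\), \(w\) distributed appropriately), together with the exponential decay of \(z_{\varepsilon,\xi}\) and the Sobolev embedding \(E\hookrightarrow L^6\), I get \(\|N_\varepsilon(w)\|\le C(\|w\|^2+\|w\|^3)\). Choosing \(c_0>2C'\bar C\) and \(\varepsilon\) small, this produces \(\|S_\varepsilon(w)\|\le c_0\varepsilon\) on \(\Gamma_\varepsilon(\rho)\).

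For the pointwise membership in \(\mathcal W_\varepsilon(\rho)\), I would argue as in \cite{AMR}: the function \(\bar w=S_\varepsilon(w)\) satisfies an equation of the form
\[
-\Delta \bar w + V(\varepsilon x)\bar w = g_\varepsilon(x),
\]
up to a projection onto \((T_{z_{\varepsilon,\xi}}Z)^\perp\) whose contribution is controlled by the \(E\)-norm estimate above. The right-hand side \(g_\varepsilon\) is built from \(P\nabla I_\varepsilon(z_{\varepsilon,\xi})\) and the convolution pieces of \(N_\varepsilon(w)\); since \(z_{\varepsilon,\xi}\) decays exponentially and \(W*|z_{\varepsilon,\xi}|^2\) is uniformly bounded by Hardy--Littlewood--Sobolev, \(g_\varepsilon(x+\xi)\) is dominated by a positive, continuous, rapidly decaying radial function \(f(|x|)\) satisfying the integrability condition of Lemma \ref{lem:16}. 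Outside the ball of radius \(\rho\), Lemma \ref{lem:1} gives \(V(\varepsilon x+\varepsilon\xi)\ge m|x|^{-\alpha}\), so I can compare \(\bar w\) against the super-solution \(\gamma(\rho)\sqrt\varepsilon\,u_1(|x|)\) built from the Bessel/power profiles of Section 3, invoking Lemma \ref{lem:16}. Inside the ball \(\{|x-\xi|\le\rho\}\), standard elliptic estimates combined with the \(E\)-norm bound \(\|\bar w\|\le c_0\varepsilon\) yield \(\bar w(x+\xi)\le\sqrt\varepsilon\), upon choosing \(c_0\) suitably (the \(\sqrt\varepsilon\) slack is what absorbs the \(H^1\)-to-\(L^\infty\) loss on the fixed bounded region).

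For the contraction property, I would apply the Mean Value Theorem:
\[
S_\varepsilon(w_1)-S_\varepsilon(w_2)=(P\nabla^2 I_\varepsilon(z_{\varepsilon,\xi}))^{-1}\!\int_0^1\!\bigl[P\nabla^2 I_\varepsilon(z_{\varepsilon,\xi})-P\nabla^2 I_\varepsilon(z_{\varepsilon,\xi}+w_2+t(w_1-w_2))\bigr](w_1-w_2)\,dt.
\]
The integrand is a bilinear form in \(w_1-w_2\) and the intermediate point, whose convolution part is again estimated through Proposition \ref{prop:HLS} as in Lemma \ref{lem:3}, producing a factor \(\|w_1\|+\|w_2\|\le 2c_0\varepsilon\). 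Hence \(\|S_\varepsilon(w_1)-S_\varepsilon(w_2)\|\le 2C'Cc_0\varepsilon\|w_1-w_2\|\), which is \(<\|w_1-w_2\|\) for \(\varepsilon\) small.

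The main obstacle is the pointwise decay step: controlling the right-hand side \(g_\varepsilon\) in the \emph{pointwise} sense (not merely in norm) so that Lemma \ref{lem:16} applies, since \(g_\varepsilon\) mixes the exponentially decaying term \(P\nabla I_\varepsilon(z_{\varepsilon,\xi})\) with the convolution remainder \((W*|z_{\varepsilon,\xi}+w|^2)(z_{\varepsilon,\xi}+w)-(W*|z_{\varepsilon,\xi}|^2)z_{\varepsilon,\xi}\). The compensation here is precisely the homogeneity and non-locality emphasized after Lemma \ref{lem:10}: the convolution \(W*|z_{\varepsilon,\xi}+w|^2\) is uniformly bounded by a constant depending on \(\|z_{\varepsilon,\xi}+w\|_{12/5}\), and the multiplying factor \((z_{\varepsilon,\xi}+w)\) inherits the decay of the ansatz plus the small bound provided by \(\mathcal W_\varepsilon(\rho)\) itself, so the iteration closes.
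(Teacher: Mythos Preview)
There is a genuine gap in your norm and contraction estimates. You claim that Hardy--Littlewood--Sobolev together with the embedding $E\hookrightarrow L^6$ yields $\|N_\varepsilon(w)\|\le C(\|w\|^2+\|w\|^3)$. But the space $E$ only controls $\|\nabla w\|_{L^2}$ and $\|V(\varepsilon\cdot)^{1/2}w\|_{L^2}$, and since $V$ may decay to zero this gives \emph{no} $L^p$ control on $w$ for any $p<6$. Try the cubic piece $\int W(x-y)\,w(x)^2\,w(y)v(y)\,dx\,dy$: any application of Proposition~\ref{prop:HLS} with the factors $w$ and $v$ placed in $L^6$ forces one of the two H\"older exponents below $1$, which is not admissible. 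The same obstruction hits the quadratic terms as soon as the exponentially decaying factor $z_{\varepsilon,\xi}$ sits on only one side of the convolution. In Lemma~\ref{lem:3} this never happens because every term carries at least one $z_{\varepsilon,\xi}$ on each side; in $N_\varepsilon(w)$ it does.

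The paper's proof avoids this precisely by exploiting the \emph{pointwise} decay built into $\Gamma_\varepsilon(\rho)\subset\mathcal W_\varepsilon(\rho)$: for $w\in\mathcal W_\varepsilon(\rho)$ one has $|w(x)|^{\tau-2}\le c\,V(\varepsilon x)$ for a suitable $\tau>2$, which converts the weighted $L^2$ part of the $E$-norm into an honest $L^\tau$ bound and makes $\int|w|^{3/2}$ finite. This yields $\|\nabla S_\varepsilon(w)\|\le C\|w\|^\delta$ with some $\delta<1$ (not a linear Lipschitz constant as you wrote), and the contraction follows via the Mean Value Theorem. So the decay information in $\mathcal W_\varepsilon(\rho)$ is not merely needed to show that $S_\varepsilon$ preserves $\mathcal W_\varepsilon(\rho)$ --- which, incidentally, is the content of the \emph{next} lemma rather than this one --- it is already indispensable for the norm bound and the contraction proved here.
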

\begin{proof}
Let $\bar{C}$ be the constant given in Lemma \ref{lem:10}. Moreover, 
\[
\| (P \nabla^2 I_\varepsilon (z_{\varepsilon,\xi}) )^{-1} \| \leq C'
\]
 for some $C'>0$. Choose $c_0 = 2 C' \bar{C}$. For $w \in \Gamma_\varepsilon (\rho)$, we have
 \[
 \nabla S_\varepsilon (w)\colon v  \mapsto v - \left( P \nabla^2 I_\varepsilon (z_{\varepsilon,\xi}) \right)^{-1} \left( P \nabla^2 I_\varepsilon (z_{\varepsilon,\xi}+w)(v) \right) .
 \]
We apply $P \nabla^2 I_\varepsilon (z_{\varepsilon,\xi})$ and find
\begin{equation*}
\| P \nabla^2 I_\varepsilon (z_{\varepsilon,\xi})(\langle \nabla S_\varepsilon (w),v \rangle )\| =
\| P \nabla^2 I_\varepsilon (z_{\varepsilon,\xi})(v) - P \nabla^2 I_\varepsilon (z_{\varepsilon,\xi}+w)(v) \|.
\end{equation*}
But, for any $w_1$, $w_2 \in \Gamma_\varepsilon(\rho)$,
\begin{multline*}
D^2 I_\varepsilon (z_{\varepsilon,\xi})(w_1,w_2) - D^2 I_\varepsilon (z_{\varepsilon,\xi}+w)(w_1,w_2) = \\
-2 \int W(x-y) z_{\varepsilon,\xi}(x) w_1(x) z_{\varepsilon,\xi}(y)w_2(y) \, dx\, dy \\
{}- \int W(x-y) z_{\varepsilon,\xi}(x)^2 w_1(y)w_2(y) \, dx\, dy \\
{}+2 \int W(x-y) \left( z_{\varepsilon,\xi}(x)+w(x) \right) w_1(x)  \left( z_{\varepsilon,\xi}(y)+w(y) \right) w_2(y) \, dx \, dy \\
{}+ \int W(x-y) \left( z_{\varepsilon,\xi}(x)+w(x) \right)^2 w_1(y) w_2(y) \, dx\, dy \\
= 2 \int W(x-y) w(x) w_1(x) w(y) w_2(y) \, dx\, dy \\
{}+ \int W(x-y) \left( \left( z_{\varepsilon,\xi}(x)+w(x) \right)^2-z_{\varepsilon,\xi}(x)^2 \right) w_1(y) w_2 (y) \, dx\, dy.
\end{multline*}
The first term is estimated by
\begin{equation*}
\left| \int W(x-y) w(x) w_1(x) w(y) w_2(y) \, dx\, dy \right| \leq c_1 \|w_1\|_{L^6} \|w_2\|_{L^6} \|w\|_E^{\frac{8}{3q}},
\end{equation*}
with $q>8/3$. Indeed, we apply the usual Hardy--Littlewood--Sobolev inequality, and we get
\begin{equation*}
\left| \int W(x-y) w(x) w_1(x) w(y) w_2(y) \, dx\, dy \right| \leq \|w \cdot w_1\|_{L^{\frac{6}{5}}} \|w \cdot w_2\|_{L^{\frac{6}{5}}}.
\end{equation*}
Now,
\begin{equation*}
\int |w|^{6/5} |w_1|^{6/5} \leq \left( \int |w_1|^6 \right)^{1/5} \left( \int |w|^{3/2} \right)^{4/5}.
\end{equation*}
For any $q>1$, let $q'=q/(q-1)$. Then
\begin{equation*}
\int |w|^{3/2} \leq \left( \int |w|^{\frac{3}{4}q} \right)^{1/q} \left( \int |w|^{\frac{3}{4}q'} \right)^{1/q'}
\end{equation*}
Now, $\int |w|^{\frac{3}{4}q'}$ is finite, since $w \in \Gamma_\varepsilon (\rho)$. Choose now $q > 8/3$ (so that $\tau = 3q/4 >2$); there exists a constant $c_2$ such that $|w(x)|^{\tau -2} \leq c_2 V(\varepsilon x)$. Hence
\begin{eqnarray*}
\left( \int |w|^{\frac{3}{4}q} \right)^{1/q} &\leq& c_3 \left( \int |w|^2 |w|^{\tau -2} \right)^{1/q} \\
&\leq& c_4 \left( \int |w|^2 V(\varepsilon x) \right)^{1/q}  \leq c_5 \|w\|^{2/q}.
\end{eqnarray*}
Similar arguments can be used to estimate the term
\begin{multline*}
\int W(x-y) \left( \left( z_{\varepsilon,\xi}(x)+w(x) \right)^2-z_{\varepsilon,\xi}(x)^2 \right) w_1(y) w_2 (y) \, dx\, dy \\
= \int W(x-y) \left( w(x) (z_{\ge,\xi}(x)+w(x)) \right) w_1(y) w_2 (y) \, dx\, dy
\end{multline*}
Therefore,
\begin{equation*}
\| P \nabla^2 I_\varepsilon (z_{\varepsilon,\xi})(\langle \nabla S_\varepsilon (w),v \rangle)\| \leq c_6 \|w\|_E^{\delta} \|v\|,
\end{equation*}
for some $\delta < 1$.
If $w_1$, $w_2 \in \Gamma_\varepsilon (\rho)$, then
\begin{multline*}
\|S_\varepsilon (w_1)-S_\varepsilon (w_2) \| \leq \| \left( P \nabla^2 I_\varepsilon (z_{\varepsilon,\xi}) \right)^{-1}\| \times \\
\times \| P \nabla^2 I_\varepsilon (z_{\varepsilon,\xi}) \left( S_\varepsilon (w_1)-S_\varepsilon (w_2) \right) \|
\end{multline*}
and by the Intermediate Valued Theorem we find
\begin{equation*}
\|S_\varepsilon (w_1)-S_\varepsilon (w_2) \| \leq c_7 \left( \max_{0 \leq s \leq 1} \|w_2+s(w_1-w_2) \| \right)^{\delta} \|w_1-w_2\|.
\end{equation*}
Since $w_1$ and $w_2$ belong to $\Gamma_\varepsilon (\rho)$, we easily deduce that 
\begin{equation} \label{eq:35}
\|S_\varepsilon (w_1)-S_\varepsilon (w_2) \| = o_\varepsilon (1) \|w_1-w_2\|.
\end{equation}
Hence $S_\varepsilon$ is a contraction on $\Gamma_\varepsilon(\rho)$. Taking $w_1=w$ and $w_0=0$ in (\ref{eq:35}), we get
\[
\|S_\varepsilon (w)-S_\varepsilon (0) \| = o_\varepsilon (1) \|w\|.
\]
On the other hand,
\begin{align*}
\| S_\varepsilon (0) \| &= \Big\| \left( P \nabla^2 I_\varepsilon (z_{\varepsilon,\xi})\right)^{-1} \left( P \nabla I_\varepsilon (z_{\varepsilon,\xi}) \right) \Big\| \\
&\leq C' \| P \nabla I_\varepsilon (z_{\varepsilon,\xi}) \| \leq C' \bar{C} \varepsilon.
\end{align*}
By the triangular inequality,
\begin{align*}
\|S_\varepsilon (w) \| &\leq \|S_\varepsilon (w)-S_\varepsilon (0) \| + \|S_\varepsilon (0)\| 
\leq o_\varepsilon (1) \|w\| + C' \bar{C} \varepsilon \\
&= o_\varepsilon (1) \|w\|  + \tfrac{1}{2} c_0 \varepsilon.
\end{align*}
Since $w \in \Gamma_\varepsilon$, $\|w\|\leq c_0 \varepsilon$; hence $\|S_\varepsilon (w)\| \leq c_0 \varepsilon$ provided $\varepsilon$ is small enough.
\end{proof}

The most important, and probably the most difficult step, is to prove that $S_\varepsilon$ maps $\Gamma_\varepsilon (\rho)$ into $\mathcal{W}_\varepsilon (\rho)$. Our proof follows \cite{AMR}, but we remark that the form of the convolution term can be exploited to cancel several terms.

\begin{lemma}
For all $\varepsilon$ small enough, $S_\varepsilon (\Gamma_\varepsilon (\rho)) \subset\mathcal{W}_\varepsilon (\rho)$. 
\end{lemma}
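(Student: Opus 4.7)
Let $w \in \Gamma_\varepsilon(\rho)$ and set $\varphi = S_\varepsilon(w)$. Lemma~\ref{lem:18} already gives $\|\varphi\|_E \leq c_0\varepsilon$ and $\varphi \perp T_{z_{\varepsilon,\xi}}Z$, so the only thing left to prove is the pointwise estimate that defines $\mathcal{W}_\varepsilon(\rho)$. My plan, following \cite{AMR}, is (i)~to derive the pointwise Euler--Lagrange equation satisfied by $\varphi$; (ii)~on the ``far'' region $\{|x-\xi|>\rho\}$, to dominate $|\varphi|$ by a constant multiple of $\sqrt{\varepsilon}\,u_1(|x-\xi|)$ via the super-solution Lemma~\ref{lem:16}; and (iii)~on the bounded region $B(\xi,\rho)$, to upgrade the energy bound to an $L^\infty$ bound by elliptic regularity.

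For step~(i), applying $P\nabla^2 I_\varepsilon(z_{\varepsilon,\xi})$ to the identity
\[
\varphi - w = -\bigl(P\nabla^2 I_\varepsilon(z_{\varepsilon,\xi})\bigr)^{-1} P\nabla I_\varepsilon(z_{\varepsilon,\xi}+w),
\]
expanding $\nabla I_\varepsilon(z_{\varepsilon,\xi}+w)$ around $z_{\varepsilon,\xi}$ by Taylor's formula, and using (\ref{eq:16}), I obtain
\begin{multline*}
-\Delta\varphi + V(\varepsilon x)\varphi = (W*|z_{\varepsilon,\xi}|^{2})\varphi + 2\bigl(W*(z_{\varepsilon,\xi}\varphi)\bigr)z_{\varepsilon,\xi} \\
{}- [V(\varepsilon x)-V(\varepsilon\xi)]z_{\varepsilon,\xi} - \mathcal{R}(w) + \sum_{j=0}^{3}\lambda_j \eta_j,
\end{multline*}
where $\mathcal{R}(w)$ collects the quadratic and cubic remainders in $w$ produced by the convolution nonlinearity and $\{\eta_j\}_{j=0}^{3}$ is a basis of $T_{z_{\varepsilon,\xi}}Z$. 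Testing this equation against each $\eta_j$, using $\varphi \perp \eta_j$ and Lemma~\ref{lem:10}, gives $\lambda_j = O(\varepsilon)$.

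For step~(ii), I check that, on $\{|x-\xi|>\rho\}$, each source term in the equation above is pointwise dominated by $\sqrt{\varepsilon}\,f(|x-\xi|)$ for a function $f$ fulfilling the integrability condition of Lemma~\ref{lem:16}. The exponential decay of $z_{\varepsilon,\xi}$ and of each $\eta_j$ (Theorem~\ref{th:2}) takes care of $2(W*(z_{\varepsilon,\xi}\varphi))z_{\varepsilon,\xi}$ and of the multiplier contributions; the mean-value bound $|V(\varepsilon x)-V(\varepsilon\xi)| \leq \varepsilon V_1 |x-\xi|$ from (V2), combined once more with the decay of $z_{\varepsilon,\xi}$, handles the ``error'' term $[V(\varepsilon x)-V(\varepsilon\xi)]z_{\varepsilon,\xi}$; the pointwise bound $|w(x)| \leq \sqrt{\varepsilon}\,u_1(|x-\xi|)$ available on $\Gamma_\varepsilon(\rho)$, together with Proposition~\ref{prop:HLS}, controls $\mathcal{R}(w)$; and the coefficient $(W*|z_{\varepsilon,\xi}|^{2})(x) = O(|x-\xi|^{-1})$ is, after possibly enlarging $\rho$, absorbed into the lower bound $V(\varepsilon x) \geq m|x-\xi|^{-\alpha}$ of Lemma~\ref{lem:1}. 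A comparison with the super-solution $\gamma(\rho)\sqrt{\varepsilon}\,u_1$ through Lemma~\ref{lem:16} then yields $|\varphi(x+\xi)| \leq \gamma(\rho)\sqrt{\varepsilon}\,u_1(|x|)$ for $|x| \geq \rho$.

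Step~(iii) is routine: on $B(\xi,\rho)$ the norm $\|\cdot\|_E$ is equivalent to $\|\cdot\|_{H^1}$, hence $\|\varphi\|_{H^1(B(\xi,\rho))} = O(\varepsilon)$, and a Moser iteration applied to the equation of step~(i), whose right-hand side lies in $L^q(B(\xi,\rho))$ for some $q > 3/2$ thanks to Proposition~\ref{prop:HLS}, yields $\|\varphi\|_{L^\infty(B(\xi,\rho))} = O(\varepsilon) \leq \sqrt{\varepsilon}$. The main obstacle is step~(ii): in contrast with \cite{AMR}, two genuinely non-local terms appear in the linearisation, namely $(W*|z_{\varepsilon,\xi}|^{2})\varphi$ (which acts as a coefficient rather than a source) and $2(W*(z_{\varepsilon,\xi}\varphi))z_{\varepsilon,\xi}$ (which depends non-locally on the unknown $\varphi$ itself). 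The homogeneity of $W$ and the exponential decay of $z_{\varepsilon,\xi}$ are precisely the features that allow these two terms to be either absorbed into the operator $-\Delta + V(\varepsilon x)$ or rendered pointwise negligible on the far region; this is the cancellation anticipated in the paragraph just preceding the lemma.
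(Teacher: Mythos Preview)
Your approach is essentially the paper's: derive the PDE satisfied by $\tilde w=S_\varepsilon(w)$, estimate the source $g_0$ in $L^{q/2}$ on $B(0,2\rho)$ and pointwise by $o_\varepsilon(1)\sqrt{\varepsilon}\,u_1$ on the exterior, then use interior elliptic regularity for (\ref{eq:38}) and a comparison with the barrier problem governed by Lemma~\ref{lem:16} for (\ref{eq:39}). The paper's proof records exactly these steps (it writes the equation as $L_0\tilde w_0=g_0$ with $g_0=g_1+g_2+g_3$ and then refers to \cite{AMR} for the mechanics of the comparison), and your identification of the two genuinely non-local terms as the only new feature relative to \cite{AMR} matches the remark preceding the lemma.

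One point deserves a second look. You propose to absorb the coefficient $(W*|z_{\varepsilon,\xi}|^2)(x)=O(|x-\xi|^{-1})$ into the lower bound $V(\varepsilon x)\geq m|x-\xi|^{-\alpha}$ ``after possibly enlarging $\rho$''. This works cleanly only when $\alpha\leq 1$; for $\alpha\in(1,2]$ the term $|x|^{-1}$ eventually dominates $m|x|^{-\alpha}$ no matter how large $m$ or $\rho$ is chosen, so the effective potential $V-(W*z_0^2)$ need not stay above $m'|x|^{-\alpha}$. The paper sidesteps this by keeping both non-local terms inside the operator $L_0$ and invoking the comparison machinery of \cite{AMR} (with the separate treatment announced for $\alpha=2$); in practice one moves $(W*z_0^2)\tilde w_0$ to the right-hand side as a source, using that for $\alpha<2$ the barrier $u_1$ decays faster than any polynomial so $|x|^{-1}u_1$ still satisfies the integrability hypothesis of Lemma~\ref{lem:16}, and handles $\alpha=2$ with the explicit power solutions. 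Apart from this imprecision, your roadmap and the paper's proof coincide.
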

\begin{proof}
For simplicity, we set $\tilde{w}=S_\ge (w)$. We notice that 
\[
\tilde{w} = w - \left( P \nabla^2 I_\ge (z_{\ge,\xi}) \right)^{-1} P \nabla I_\ge (z_{\ge,\xi}+w),
\]
which we can write as
\[
P \left(
\nabla^2 I_\ge(z_{\ge,\xi})(\tilde{w}-w) + \nabla I_\ge (z_{\ge,\xi}+w)
\right)=0.
\]
Hence $\tilde{w}$ satisfies the identity
\begin{multline*}
-\Delta \tilde{w} + V(\ge x) \tilde{w} - \tilde{w} \int W(x-y) z_{\ge,\xi}(y)^2 \, dy \\
{}- 2 z_{\ge,\xi} \int W(x-y) z_{\ge,\xi}(y) \tilde{w}(y) \, dy \\
= 2 w \int W(x-y) z_{\ge,\xi}(y)w(y)\, dy + \left( z_{\ge,\xi}+w \right) \int W(x-y)w(y)^2 \, dy \\
{}- \left( -\Delta z_{\ge,\xi} + V(\ge x) z_{\ge,\xi} - z_{\ge,\xi} \int W(x-y) z_{\ge,\xi}(y)^2 \, dy 
\right) \\
{}+ \left( -\Delta \dot{z}_{\ge,\xi} + V(\ge ) \dot{z}_{\ge,\xi}
\right) \eta,
\end{multline*}
where 
\[
\eta = 
\left\langle
\nabla^2 I_\ge (z_{\ge,\xi})(\tilde{w}-w)+\nabla I_\ge (z_{\ge,\xi}+w),\dot{z}_{\ge,\xi}
\right\rangle \|\dot{z}_{\ge,\xi}\|^{-2}
\]
and $\dot{z}_{\ge,\xi}$ stands for a linear combination of the derivatives 
\[
\frac{\partial}{\partial \xi_1} {z}_{\ge,\xi}, \frac{\partial}{\partial \xi_2} {z}_{\ge,\xi}, \frac{\partial}{\partial \xi_3} {z}_{\ge,\xi},
\]
related to the projection of the equation $\tilde{w}=S_\ge (w)$ onto $\left( T_{z_{\ge,\xi}} Z \right)^\perp$.

We define $z_0(x)=z_{\ge,\xi}(x+\xi)$, $w_0(x)=w(x+\xi)$, and $\tilde{w}_0(x)=\tilde{w}(x+\xi)$. Let $L_0$ be the linear operator defined by
\begin{multline*}
L_0 v = -\Delta v + V(\ge x + \ge \xi) v - v \int W(x-y) z_0(y)^2 \, dy \\
{}- 2 z_0 \int W(x-y) z_0(y) v(y)\, dy.
\end{multline*}
Therefore $\tilde{w}_0$ verifies
\begin{multline*}
L_0 \tilde{w}_0  = \\
2 w_0 \int W(x-y) z_0(y) w_0(y) \, dy + \left( z_0+w_0 \right) \int W(x-y) w_0(y)^2 \, dy\\
{}- \left( -\Delta z_0 + V(\ge x + \ge \xi) z_0 - z_0 \int W(x-y) z_0(y)^2 \, dy
\right) \\
{}+ \left( -\Delta \dot{z}_0 + V(\ge x + \ge \xi) \dot{z}_0 \right) \eta.
\end{multline*}
If we set
\begin{align*}
g_1 &= 2 w_0 \int W(x-y) z_0(y) w_0(y) \, dy + \left( z_0+w_0 \right) \int W(x-y) w_0(y)^2 \, dy \\
g_2 &= \left( -\Delta \dot{z}_0 + V(\ge x + \ge \xi) \dot{z}_0 \right) \eta \\
g_3 &= -\Delta z_0 + V(\ge x + \ge \xi) z_0 - z_0 \int W(x-y) z_0(y)^2 \, dy,
\end{align*}
and $g_0 = g_1+g_2+g_3$, then $L_0 \tilde{w}_0 = g_0$.

We need to prove that
\begin{equation} \label{eq:38}
|\tilde{w}_0 (x)| \leq \sqrt{\ge} \quad\mbox{for $|x| \leq \rho$}
\end{equation}
and
\begin{equation} \label{eq:39}
|\tilde{w}_0 (x)| \leq \gamma(\rho) \sqrt{\ge} u_1 (|x|) \quad\mbox{for $|x| \geq \rho$}.
\end{equation}
The main ingredients are of the following estimates:
\begin{equation} \label{eq:40}
\|g_0\|_{L^{q/2}(B(0,2\rho)} \leq o_\ge(1) \sqrt{\ge} \quad\text{for $q>3$}
\end{equation}
and
\begin{equation} \label{eq:41}
|g_0(x)| \leq o_\ge(1) \sqrt{\ge} u_1(|x|) \quad \mbox{for $|x| \geq \rho$}.
\end{equation}
Moreoever, $|g_1(x)| \leq c_1 |w_0(x)|$. Indeed, this follows easily from the definition of $g_1$ and from the inequality
\begin{equation*}
\left \| \int W(\cdot -y) z_0(y) w_0(y) \, dy \right\|_\infty \leq c_2 \|z_0 w_0 \|_{L^{3/2}},
\end{equation*}
which is the limit case of (\ref{eq:hls6}) as $s \to +\infty$.
Now (\ref{eq:40}) and (\ref{eq:41}) follow from arguments that are completely analogous to those carried out in \cite{AMR}, pages 338 and 339. We just suggest how (\ref{eq:38}) and (\ref{eq:39}) are deduced from (\ref{eq:40}) and (\ref{eq:41}).

By standard elliptic estimates (see for instance \cite{GT}, Theorem 8.24) we have, for $q>3$,
\begin{equation*}
\|\tilde{w}_0\|_{L^\infty (B(0,\rho))} \leq c_3 \|\tilde{w}_0\|_{L^2 (B(0,2\rho))} + c_4 \|g_0\|_{L^{q/2}(B(0,2 \rho))}.
\end{equation*}
Since $\|z_{\ge,\xi}\| \leq c_{5}$ and by virtue of Lemma \ref{lem:18}, we deduce 
\[
\|\tilde{w}_0\|_{L^2(B(0,2\rho))} \leq c_{6} \ge.
\]
Coupling this with (\ref{eq:40}), we find
\begin{equation*}
\|\tilde{w}_0\|_{L^\infty (B(0,\rho))} \leq c_{7} \ge + o_\ge (1) \sqrt{\ge}.
\end{equation*}
Hence (\ref{eq:38}) follows. If $\alpha<2$, the proof of (\ref{eq:39}) follows as in \cite{AMR}, by comparing $\tilde{w}_0$ with the solution of the problem
\begin{equation*}
\left\lbrace
\begin{array}{ll}
-\Delta \phi + |x|^{-\alpha} \phi = \sqrt{\ge} u_1 , &|x|>\rho \\
\phi(x)=\sqrt{\ge} &|x|=\rho,
\end{array}
\right.
\end{equation*}
exploiting Lemma \ref{lem:16}. A few changes are in order when $\alpha=2$, and we refer to \cite{AMR}.
\end{proof}
We have all the ingredients to state a result about the existence and uniqueness of a fixed point for $S_\ge$ in $\Gamma_\varepsilon (\rho)$. We omit the proof, which is based on standard techniques and can be found in \cite{AM,AMR,AMS}.
\begin{proposition} \label{prop:22}
Under the assumptions of the Theorem \ref{th:main}, for $\varepsilon$ sufficiently small, there exists one and only one fixed point $w \in \Gamma_\ge (\rho)$ of $S_\ge$. Moreover, $w$ is differentiable with respect to $\xi \in \mathbb{R}^3$, and there exists a constant $C>0$ such that
\begin{equation} \label{eq:42}
\|w\| \leq C \ge, \quad \|\nabla_\xi w\| \leq C \ge^\delta,
\end{equation}
where $\delta < 1$ was found in the proof of Lemma \ref{lem:18}.
\end{proposition}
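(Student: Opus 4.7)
The plan is to combine the two preceding lemmas with Banach's contraction principle to get existence and uniqueness of $w$, and then recover the $\xi$--regularity from a standard implicit function argument applied to the fixed--point equation.

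First I would verify that $\Gamma_\varepsilon(\rho)$ is invariant under $S_\varepsilon$. The norm condition $\|S_\varepsilon(w)\|\leq c_0 \varepsilon$ is the content of Lemma \ref{lem:18}, the pointwise condition $S_\varepsilon(w)\in\mathcal{W}_\varepsilon(\rho)$ is the content of the lemma immediately following it, and the orthogonality $S_\varepsilon(w)\in(T_{z_{\varepsilon,\xi}}Z)^\perp$ is automatic from the presence of the projector $P$ in the definition (\ref{ eq:26}) of $S_\varepsilon$. Since $\Gamma_\varepsilon(\rho)$ is a closed subset of the Hilbert space $E$ (it is cut out by a closed ball, a family of closed pointwise inequalities, and a closed orthogonality relation), and $S_\varepsilon$ is a contraction on it by Lemma \ref{lem:18}, Banach's fixed point theorem provides a unique $w=w(\varepsilon,\xi)\in\Gamma_\varepsilon(\rho)$ with $w=S_\varepsilon(w)$. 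The first estimate in (\ref{eq:42}) is then just the defining bound of $\Gamma_\varepsilon(\rho)$.

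For the $\xi$--regularity I would apply the implicit function theorem to $F(\xi,w)=w-S_\varepsilon(w)$. The partial $\partial_w F=I-\partial_w S_\varepsilon$ is invertible, uniformly in $\xi$ with $|\varepsilon\xi|\leq 1$, because $\|\partial_w S_\varepsilon(w)\|=o_\varepsilon(1)$ on $\Gamma_\varepsilon(\rho)$ (this is exactly what drives the contraction estimate (\ref{eq:35}) in Lemma \ref{lem:18}). Smoothness of $F$ in $\xi$ follows from the explicit form $z_{\varepsilon,\xi}(x)=V(\varepsilon\xi)U(\sqrt{V(\varepsilon\xi)}(x-\xi))$, the smoothness of $V$, and the fact that $U$ together with its first derivatives decays exponentially (Theorem \ref{th:2}). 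This yields that $w$ is $C^1$ in $\xi$, and differentiating $w=S_\varepsilon(w)$ gives
\[
\nabla_\xi w=\bigl(I-\partial_w S_\varepsilon(w)\bigr)^{-1}\partial_\xi S_\varepsilon(w),
\]
so the estimate $\|\nabla_\xi w\|\leq C\varepsilon^\delta$ reduces to $\|\partial_\xi S_\varepsilon(w)\|\leq C\varepsilon^\delta$.

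The main obstacle is this last estimate, since differentiating
\[
S_\varepsilon(w)=w-\bigl(P\nabla^2 I_\varepsilon(z_{\varepsilon,\xi})\bigr)^{-1}P\nabla I_\varepsilon(z_{\varepsilon,\xi}+w)
\]
with respect to $\xi$ produces three types of contributions: one from $\partial_\xi P$ (a finite--dimensional perturbation controlled by the exponential decay of $\partial z_{\varepsilon,\xi}/\partial\xi_j$), one from $\partial_\xi(\nabla^2 I_\varepsilon(z_{\varepsilon,\xi}))^{-1}$ (handled by differentiating the implicit identity and using the uniform invertibility already established), and the genuine term $\partial_\xi(\nabla I_\varepsilon(z_{\varepsilon,\xi}+w))$. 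Each of the convolution contributions in the last term is estimated by the same Hardy--Littlewood--Sobolev--based interpolation used in Lemma \ref{lem:18}: one bounds $L^{6/5}$ norms of products $z_{\varepsilon,\xi}w$, $w\cdot w$, etc., and interpolates $\int|w|^{3q/4}$ between a weighted $L^2$ norm (finite because $w\in\Gamma_\varepsilon(\rho)$) and $L^\infty$ on the support of $w$, which is exactly the mechanism that produced the exponent $\delta<1$ (with $q>8/3$) in the proof of Lemma \ref{lem:18}. This is entirely parallel to the corresponding argument in \cite{AMR,AMS}, and since the only new feature with respect to that reference is the convolution term, whose relevant estimates have already been developed above, I would refer to those sources for the remaining bookkeeping and merely invoke them here.
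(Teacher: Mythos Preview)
Your proposal is correct and follows exactly the standard route the paper has in mind: the paper itself omits the proof entirely and refers to \cite{AM,AMR,AMS}, and your outline (Banach contraction on the closed set $\Gamma_\varepsilon(\rho)$ using the two preceding lemmas, then implicit function theorem with $\partial_w F = I - \partial_w S_\varepsilon$ uniformly invertible from (\ref{eq:35}), and finally the $\varepsilon^\delta$ bound on $\partial_\xi S_\varepsilon$ via the same Hardy--Littlewood--Sobolev interpolation as in Lemma \ref{lem:18}) is precisely that standard argument.
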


\section{Analysis of the reduced functional}

We recall that the manifold
\begin{equation*}
\tilde{Z} = \left\{ z_{\ge,\xi}+w \mid z_{\ge,\xi} \in Z,\ w \in \Gamma_\ge (\rho) \right\}
\end{equation*}
is a natural constraint for the functional~$I_\ge$: this is a general feature of the perturbation method that we are using, see \cite{AM}. Loosely speaking, we have found $w \perp T_{z_{\ge,\xi}} Z$ such that $\nabla I_\ge (z_{\ge,\xi}+w) \in T_{z_{\ge,\xi}} Z$. If $I_\ge$ has a critical point constrained to $\tilde{Z}$, then for $\ge$ sufficiently small we have $\nabla I_\ge \in T_{z_{\ge,\xi}} Z \cap \left( T_{z_{\ge,\xi}} Z \right)^\perp = \{0\}$.
In addition, $\tilde{Z} \simeq \mathbb{R}^3$, and therefore the \emph{constrained function} given by
\begin{equation*}
\Phi_\ge (\xi) = I_\ge (z_{\ge,\xi}+w)
\end{equation*}
has the property that every critical point $\xi$ produces a solution $z_{\ge,\xi}+w$ of (\ref{eq:5}) for $\ge$ sufficiently small. However, a direct study of $\Phi_\ge$ on $\mathbb{R}^3$ is difficult, and we want to use its Taylor expansion to locate its critical points. 

\begin{proof}[Proof of Theorem \ref{th:main}]
We expand the function $\Phi_\ge$:
\begin{eqnarray*}
\Phi_\ge(\xi) &=& \frac{1}{2} \int |\nabla (z_{\ge,\xi}+w)|^2 + \frac{1}{2} \int V(\ge x) |z_{\ge,\xi}+w|^2 \\
&&{}-\frac{1}{4} \int W(x-y) \left( z_{\ge,\xi}(x)+w(x) \right)^2 \left( z_{\ge,\xi}(y)+w(y) \right)^2 \, dx \, dy \\
&=& \frac{1}{2} \int |\nabla z_{\ge,\xi}|^2 + \frac{1}{2} |\nabla w|^2 + \int \nabla z_{\ge,\xi} \cdot \nabla w + \frac{1}{2} \int V(\ge x) z_{\ge,\xi}^2 \\
&&{}-\frac{1}{4} \int W(x-y) \left( z_{\ge,\xi}(x)+w(x) \right)^2 \left( z_{\ge,\xi}(y)+w(y) \right)^2 \, dx \, dy \\
&=& \frac{1}{2} \|z_{\ge,\xi} \|^2 + \frac{1}{2} \|w\|^2 + \langle z_{\ge,\xi},w \rangle_E \\
&&{}-\frac{1}{4} \int W(x-y) \left( z_{\ge,\xi}(x)+w(x) \right)^2 \left( z_{\ge,\xi}(y)+w(y) \right)^2 \, dx \, dy.
\end{eqnarray*}
Since
\begin{equation*}
-\Delta z_{\ge,\xi} + V(\ge\xi) z_{\ge,\xi} = \left(W*|z_{\ge,\xi}|^2 \right) z_{\ge,\xi},
\end{equation*}
\begin{equation*}
\|z_{\ge,\xi}\|^2 + \int \left( V(\ge \xi) - V(\ge x) \right) z_{\ge,\xi}^2 = \int W(x-y) z_{\ge,\xi}(x)^2 z_{\ge,\xi}(y)^2 \, dx\, dy,
\end{equation*}
\begin{multline*}
\langle z_{\ge,\xi},w \rangle_E + \int \left( V(\ge \xi) - V(\ge x) \right) z_{\ge,\xi} w \\
= \int W(x-y) z_{\ge,\xi}(x)^2 z_{\ge,\xi}(y)w(y) \, dx\, dy
\end{multline*}
we have
\begin{align*}
\Phi_\ge(\xi) &= \frac{1}{2} \int W(x-y) z_{\ge,\xi}(x)^2 z_{\ge,\xi}(y)^2  dx\, dy +  \frac{1}{2} \int \left( V(\ge \xi) - V(\ge x) \right) z_{\ge,\xi}^2\\
& \quad+ \int W(x-y) z_{\ge,\xi}(x)^2 z_{\ge,\xi}(y) w(y) \, dx\, dy + \frac{1}{2} \|w\|^2 \\
&\quad{}- \frac{1}{4} \int W(x-y) \left( z_{\ge,\xi}(x)+w(x) \right)^2  \left( z_{\ge,\xi}(y)+w(y) \right)^2 \, dx\, dy\\
&\quad{}+\int \left( V(\ge \xi) - V(\ge x) \right) z_{\ge,\xi} w \\
&= \frac{1}{4} \int W(x-y) z_{\ge,\xi}(x)^2 z_{\ge,\xi}(y)^2 \, dx \, dy + \mathcal{R}_\ge(w),
\end{align*}
where 
\begin{multline*}
\mathcal{R}_\ge(w) =\frac{1}{2} \int \left( V(\ge \xi) - V(\ge x) \right) z_{\ge,\xi}^2 \\
{}+ \int W(x-y) z_{\ge,\xi}(x)^2 z_{\ge,\xi}(y) w(y) \, dx\, dy  \\
{}- \frac{1}{4} \int W(x-y) \left( z_{\ge,\xi}(x)+w(x) \right)^2  \left( z_{\ge,\xi}(y)+w(y) \right)^2 \, dx\, dy\\
{}+\int \left( V(\ge \xi) - V(\ge x) \right) z_{\ge,\xi} w + \frac{1}{2} \|w\|^2 = o(\ge)
\end{multline*}
by the usual arguments based on the Intermediate Valued Theorem and on the Hardy--Littlewood--Sobolev inequality and by Proposition \ref{prop:22}. The calculations are completely algebraic, and a direct use of the estimates (\ref{eq:42}) proves the statement. We omit the routine calculations.
Inserting the definition
\begin{equation*}
z_{\ge,\xi}(x) = V(\ge\xi) U \left( \sqrt{V(\ge\xi)} \ (x-\xi) \right),
\end{equation*}
into the expansion just obtained gives
\begin{equation}
\Phi_\ge (\xi) = \frac{1}{4} V(\ge\xi)^{\frac{3}{2}} \int W(x-y) U(x)^2 U(y)^2 \, dx\, dy + o(\ge).
\end{equation}
If $x_0$ (we may of course assume that $x_0=0$) is an isolated local mi\-nimum (or maximum) of $V$, then $\tilde{\Phi}_\ge(\xi)=\Phi_\ge(\xi/\ge)$ possesses a critical point $\xi_\ge \sim 0$, and this gives rise to a solution 
\[
u_\ge \left( \frac{x}{\ge} \right) \sim z_{\ge,\xi_\ge} \left( \frac{x-\xi_\ge}{\ge} \right)
\]
concentrating at $0=x_0$.
\end{proof}

\begin{remark}
Since $z_{\ge,\xi}$ is positive, and $w=w(\ge,\xi)$ is of order $\ge$, the solution $u_\ge$ found in Theorem \ref{th:main} is positive.
\end{remark}

\section{Other existence results and comments}

Although we have proved our main theorem under the assumption that $V$ has an isolated minimum or maximum point, some extensions can be proved. Since their proofs require some rather standard modifications (the statement about a compact manifold of non--degenerate critical points was studied in \cite{AMS}, see also \cite{AM,AMR}) of the perturbation technique, we
collect these generalized existence results in the next theorem.
We recall that a stable critical point $x_0$ of a smooth function $V$ is a critical point at which the Leray--Schauder index $\operatorname{ind}(\nabla V,x_0,0)\neq 0$. It is well-known that strict local minimum (and maximum) points are stable critical points.

\begin{theorem}
Theorem \ref{th:main} is also valid provided $x_0$ in an isolated stable critical point of $V$. More generally, if $\Sigma \subset \mathbb{R}^3$ is either a compact set of maxima (or minima) of $V$, or a compact manifold of non--degenerate critical points of $V_{|\Sigma^\perp}$, then the conclusion of Theorem \ref{th:main} is again true.
\end{theorem}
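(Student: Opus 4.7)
The starting point is the expansion of the reduced functional obtained in the proof of Theorem~\ref{th:main}, namely
\[
\Phi_\varepsilon(\xi) = c_0\, V(\varepsilon\xi)^{3/2} + o(\varepsilon), \qquad c_0 = \frac{1}{4}\int W(x-y)\, U(x)^2 U(y)^2\, dx\, dy,
\]
together with the smooth dependence $\|\nabla_\xi w\| = O(\varepsilon^\delta)$ furnished by Proposition~\ref{prop:22}. The plan is to differentiate the algebraic identity that yielded this expansion, bound the new terms by the Hardy--Littlewood--Sobolev inequality and the exponential decay of $z_{\varepsilon,\xi}$, and thereby upgrade the above asymptotics to a $C^1$-expansion. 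Equivalently, after the standard rescaling
\[
\tilde{\Phi}_\varepsilon(\xi) := \Phi_\varepsilon(\xi/\varepsilon),
\]
one should obtain $\tilde{\Phi}_\varepsilon \to c_0 V^{3/2}$ in $C^1_{\mathrm{loc}}(\mathbb{R}^3)$ as $\varepsilon \to 0$. By the natural-constraint property of $\tilde{Z}$ recalled in the previous section, it then suffices to locate critical points of $\tilde{\Phi}_\varepsilon$ near the prescribed critical set of $V$.

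For an isolated stable critical point $x_0$ of $V$, fix a ball $B = B(x_0, r)$ so small that $x_0$ is the only critical point of $V$ in $\overline{B}$ and $|\nabla V|$ is bounded below on $\partial B$. By definition of stability, $\deg(\nabla V, B, 0) \neq 0$, and the $C^1$-closeness of $\tilde{\Phi}_\varepsilon$ to $c_0 V^{3/2}$ on $\overline{B}$ guarantees $\nabla \tilde{\Phi}_\varepsilon \neq 0$ on $\partial B$ for every $\varepsilon$ sufficiently small, so by invariance under homotopy
\[
\deg(\nabla \tilde{\Phi}_\varepsilon, B, 0) = \deg\bigl(\nabla(c_0 V^{3/2}), B, 0\bigr) \neq 0.
\]
This produces a critical point $\xi_\varepsilon \in B$ of $\tilde{\Phi}_\varepsilon$, and hence, by the natural-constraint argument, a solution of~(\ref{eq:7}) concentrating at $x_0$.

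When $\Sigma$ is a compact set of maxima (or minima) of $V$, pick a bounded open neighbourhood $U \supset \Sigma$ with $\max_{\partial U} V < \min_\Sigma V$ (respectively the reverse inequality): uniform $C^0$-convergence of $\tilde{\Phi}_\varepsilon$ to $c_0 V^{3/2}$ on $\overline{U}$ forces $\tilde{\Phi}_\varepsilon$ to attain its maximum (minimum) over $\overline{U}$ in the interior, producing the desired critical point. When $\Sigma$ is a compact manifold of non-degenerate critical points of $V_{|\Sigma^\perp}$, the normal Hessian of $V$ is invertible along $\Sigma$, and since $V > 0$ by (V1) the same holds for $V^{3/2}$, so $\Sigma$ is a non-degenerate critical manifold of $c_0 V^{3/2}$. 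Splitting $\nabla \tilde{\Phi}_\varepsilon$ into tangential and normal components along a tubular neighbourhood of $\Sigma$ and applying the implicit function theorem to the normal component produces a compact critical set of $\tilde{\Phi}_\varepsilon$ close to $\Sigma$ for every $\varepsilon$ small enough, exactly as in \cite{AMS} (see also \cite{AM,AMR}).

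The main obstacle is the $C^1$-upgrade of the expansion of $\Phi_\varepsilon$: one must check that the remainder $\mathcal{R}_\varepsilon$ and its $\xi$-derivative are of order $o(\varepsilon)$. The key cancellation is the one already present in the proof of Lemma~\ref{lem:10} and of Theorem~\ref{th:main}, namely that $\nabla I_\varepsilon(z_{\varepsilon,\xi}+w) \in T_{z_{\varepsilon,\xi}}Z$ by construction of $w$, so that $\nabla_\xi \Phi_\varepsilon$ factors through the small tangential components whose size is controlled by~(\ref{eq:42}). Once this verification is in place, each of the three cases reduces to the classical topological-degree or implicit-function argument already carried out in \cite{AMS, AMR}, which we do not reproduce in detail.
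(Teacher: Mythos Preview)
The paper does not actually supply a proof of this theorem: it states the result and simply refers the reader to the ``rather standard modifications'' of the perturbation scheme carried out in \cite{AMS} (and \cite{AM,AMR}). Your proposal therefore fills in precisely what the paper omits, and it does so along the lines the authors have in mind: the $C^1$-expansion of the reduced functional, followed by a degree argument for an isolated stable critical point, a direct extremum argument for a compact set of maxima or minima, and the tubular-neighbourhood/implicit-function argument of \cite{AMS} for a non-degenerate critical manifold.

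One point deserves a word of caution. You correctly single out the $C^1$-upgrade of the expansion as the only genuine work, and you invoke the estimate $\|\nabla_\xi w\|\le C\varepsilon^\delta$ from Proposition~\ref{prop:22}. Note however that $\delta<1$, so this bound alone is \emph{not} strong enough to show that the $\xi$-gradient of the remainder is $o(\varepsilon)$ term by term. The standard way around this (which you allude to in your last paragraph) is to avoid differentiating $\mathcal{R}_\varepsilon$ directly and instead compute $\nabla_\xi\Phi_\varepsilon$ via the chain rule as $\langle\nabla I_\varepsilon(z_{\varepsilon,\xi}+w),\,\partial_\xi z_{\varepsilon,\xi}+\partial_\xi w\rangle$; since $\nabla I_\varepsilon(z_{\varepsilon,\xi}+w)$ lies in $T_{z_{\varepsilon,\xi}}Z$ by construction and $\partial_\xi w\perp T_{z_{\varepsilon,\xi}}Z$ up to lower-order terms, the contribution of $\partial_\xi w$ cancels to leading order and the weaker estimate on $\nabla_\xi w$ suffices. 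This is exactly the mechanism used in \cite{AMS,AMR}, so your sketch is sound once this cancellation is made explicit.
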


\bigskip

As recalled in the first section, in \cite{CSS} a more involved system with a magnetic field has been recently studied.  It has been known since the paper \cite{CS} that the electromagnetic Schr\"{o}dinger equation can be handled by means of the perturbation method discussed in our paper. However, it generally requires some boundedness assumptions on the potential that generates the magnetic field, and this excludes many fields of physical interest. However, we believe that our Theorem \ref{th:main} holds (and we find solutions such that $|u_\ge|$ concentrates at ``good'' critical points of $V$) for the non-local equation
\begin{equation*}
\left( \frac{\varepsilon}{\mathrm{i}} \nabla - A(x) \right)^2 u + V(x) u = \left( W * |u|^2 \right) u
\end{equation*}
if the following \emph{technical} assumption is satisfied (along with (V1) and (V2)):
\begin{description}
\item[(A1)] $A \colon \mathbb{R}^3 \to \mathbb{R}^3$ is a \emph{bounded} vector potential, with \emph{bounded} derivative: $\sup_{x \in \mathbb{R}^3} |A'(x)| < \infty$.
\end{description}
Of course solutions are complex--valued, but their moduli still concentrate. The action of $A$ is felt only in the phase of these solutions. This is, loosely speaking, a generic feature, as proved in \cite{SS}. To feel the presence of $A$, one usually needs some singularities in the magnetic field, or a very strong symmetry of the solution. See a recent paper by Cingolani \emph{et al.} (\cite{CC}) for the latter case, though in the local case.
We plan to investigate the non-local case in a future paper.

\bigskip

An interesting open (as far as we know) problem is the generalization of our results to any dimension $n \geq 3$  (the convolution kernel $W$ becomes homogeneous of degree $2-n$, of course). Even for a potential $V$ bounded away from its infimum, there are obstacles. Every paper in the literature deals with $n=3$. We cannot say whether this is only a technical obstruction. With respect to the method we have used, the main difficulty is to extend Theorem \ref{th:2}, in particular the non--degeneracy part. We suspect that the ODE proof contained in \cite{WW} can be adapted to any space dimension.

Another interesting question is whether potentials $V$ with a fast decay can be allowed. More precisely, this corresponds to the assumption that $\liminf_{|x| \to +\infty} V(x)|x|^2 = 0$. The only result which deals with such potentials is \cite{MVS}, but, once more, it seems difficult to carry their penalization scheme to our non-local situation. Moreover, as stated in \cite{MVS}, it would be useless to look for solutions near our manifold $Z_\varepsilon$, since one expects solutions to decay polynomially fast at infinity.


      \bibliographystyle{plain}
    \bibliography{Bibliography}

   \end{document}